\pdfoutput=1
\documentclass[12pt]{amsart}
\usepackage{oldgerm}
\usepackage{latexsym}
\usepackage{amsmath}
\usepackage{amscd}
\usepackage{amssymb} 
\usepackage{amsmath}
\usepackage{amsthm}
\usepackage{latexsym}
\usepackage{arydshln}
\makeatletter
\@namedef{subjclassname@2020}{%
  \textup{2020} Mathematics Subject Classification}
\makeatother
\subjclass[2020]{11F46, 11F67}
\begin{document}
\title [Boundedness of $L$-values ]{Boundedness of denominators of special values of the $L$-functions for  modular forms
}  

\author{Hidenori KATSURADA}
\address{Muroran Institute of Technology
27-1 Mizumoto, Muroran, 050-8585, Japan}
\email{hidenori@mmm.muroran-it.ac.jp}
\date{\today}

\keywords{Standard $L$-function, Siegel modular form }
\thanks{The author is partially supported by JSPS KAKENHI Grant Number (B) No.16H03919.}

\maketitle
\newcommand{\alp}{\alpha}
\newcommand{\bet}{\beta}
\newcommand{\gam}{\gamma}
\newcommand{\del}{\delta}
\newcommand{\eps}{\epsilon}
\newcommand{\zet}{\zeta}
\newcommand{\tht}{\theta}
\newcommand{\iot}{\iota}
\newcommand{\kap}{\kappa}
\newcommand{\lam}{\lambda}
\newcommand{\sig}{\sigma}
\newcommand{\ups}{\upsilon}
\newcommand{\ome}{\omega}
\newcommand{\vep}{\varepsilon}
\newcommand{\vth}{\vartheta}
\newcommand{\vpi}{\varpi}
\newcommand{\vrh}{\varrho}
\newcommand{\vsi}{\varsigma}
\newcommand{\vph}{\varphi}
\newcommand{\Gam}{\Gamma}
\newcommand{\Del}{\Delta}
\newcommand{\Tht}{\Theta}
\newcommand{\Lam}{\Lambda}
\newcommand{\Sig}{\Sigma}
\newcommand{\Ups}{\Upsilon}
\newcommand{\Ome}{\Omega}


\newcommand{\frka}{{\mathfrak a}}    \newcommand{\frkA}{{\mathfrak A}}
\newcommand{\frkb}{{\mathfrak b}}    \newcommand{\frkB}{{\mathfrak B}}
\newcommand{\frkc}{{\mathfrak c}}    \newcommand{\frkC}{{\mathfrak C}}
\newcommand{\frkd}{{\mathfrak d}}    \newcommand{\frkD}{{\mathfrak D}}
\newcommand{\frke}{{\mathfrak e}}    \newcommand{\frkE}{{\mathfrak E}}
\newcommand{\frkf}{{\mathfrak f}}    \newcommand{\frkF}{{\mathfrak F}}
\newcommand{\frkg}{{\mathfrak g}}    \newcommand{\frkG}{{\mathfrak G}}
\newcommand{\frkh}{{\mathfrak h}}    \newcommand{\frkH}{{\mathfrak H}}
\newcommand{\frki}{{\mathfrak i}}    \newcommand{\frkI}{{\mathfrak I}}
\newcommand{\frkj}{{\mathfrak j}}    \newcommand{\frkJ}{{\mathfrak J}}
\newcommand{\frkk}{{\mathfrak k}}    \newcommand{\frkK}{{\mathfrak K}}
\newcommand{\frkl}{{\mathfrak l}}    \newcommand{\frkL}{{\mathfrak L}}
\newcommand{\frkm}{{\mathfrak m}}    \newcommand{\frkM}{{\mathfrak M}}
\newcommand{\frkn}{{\mathfrak n}}    \newcommand{\frkN}{{\mathfrak N}}
\newcommand{\frko}{{\mathfrak o}}    \newcommand{\frkO}{{\mathfrak O}}
\newcommand{\frkp}{{\mathfrak p}}    \newcommand{\frkP}{{\mathfrak P}}
\newcommand{\frkq}{{\mathfrak q}}    \newcommand{\frkQ}{{\mathfrak Q}}
\newcommand{\frkr}{{\mathfrak r}}    \newcommand{\frkR}{{\mathfrak R}}
\newcommand{\frks}{{\mathfrak s}}    \newcommand{\frkS}{{\mathfrak S}}
\newcommand{\frkt}{{\mathfrak t}}    \newcommand{\frkT}{{\mathfrak T}}
\newcommand{\frku}{{\mathfrak u}}    \newcommand{\frkU}{{\mathfrak U}}
\newcommand{\frkv}{{\mathfrak v}}    \newcommand{\frkV}{{\mathfrak V}}
\newcommand{\frkw}{{\mathfrak w}}    \newcommand{\frkW}{{\mathfrak W}}
\newcommand{\frkx}{{\mathfrak x}}    \newcommand{\frkX}{{\mathfrak X}}
\newcommand{\frky}{{\mathfrak y}}    \newcommand{\frkY}{{\mathfrak Y}}
\newcommand{\frkz}{{\mathfrak z}}    \newcommand{\frkZ}{{\mathfrak Z}}


\newcommand{\bfa}{{\mathbf a}}    \newcommand{\bfA}{{\mathbf A}}
\newcommand{\bfb}{{\mathbf b}}    \newcommand{\bfB}{{\mathbf B}}
\newcommand{\bfc}{{\mathbf c}}    \newcommand{\bfC}{{\mathbf C}}
\newcommand{\bfd}{{\mathbf d}}    \newcommand{\bfD}{{\mathbf D}}
\newcommand{\bfe}{{\mathbf e}}    \newcommand{\bfE}{{\mathbf E}}
\newcommand{\bff}{{\mathbf f}}    \newcommand{\bfF}{{\mathbf F}}
\newcommand{\bfg}{{\mathbf g}}    \newcommand{\bfG}{{\mathbf G}}
\newcommand{\bfh}{{\mathbf h}}    \newcommand{\bfH}{{\mathbf H}}
\newcommand{\bfi}{{\mathbf i}}    \newcommand{\bfI}{{\mathbf I}}
\newcommand{\bfj}{{\mathbf j}}    \newcommand{\bfJ}{{\mathbf J}}
\newcommand{\bfk}{{\mathbf k}}    \newcommand{\bfK}{{\mathbf K}}
\newcommand{\bfl}{{\mathbf l}}    \newcommand{\bfL}{{\mathbf L}}
\newcommand{\bfm}{{\mathbf m}}    \newcommand{\bfM}{{\mathbf M}}
\newcommand{\bfn}{{\mathbf n}}    \newcommand{\bfN}{{\mathbf N}}
\newcommand{\bfo}{{\mathbf o}}    \newcommand{\bfO}{{\mathbf O}}
\newcommand{\bfp}{{\mathbf p}}    \newcommand{\bfP}{{\mathbf P}}
\newcommand{\bfq}{{\mathbf q}}    \newcommand{\bfQ}{{\mathbf Q}}
\newcommand{\bfr}{{\mathbf r}}    \newcommand{\bfR}{{\mathbf R}}
\newcommand{\bfs}{{\mathbf s}}    \newcommand{\bfS}{{\mathbf S}}
\newcommand{\bft}{{\mathbf t}}    \newcommand{\bfT}{{\mathbf T}}
\newcommand{\bfu}{{\mathbf u}}    \newcommand{\bfU}{{\mathbf U}}
\newcommand{\bfv}{{\mathbf v}}    \newcommand{\bfV}{{\mathbf V}}
\newcommand{\bfw}{{\mathbf w}}    \newcommand{\bfW}{{\mathbf W}}
\newcommand{\bfx}{{\mathbf x}}    \newcommand{\bfX}{{\mathbf X}}
\newcommand{\bfy}{{\mathbf y}}    \newcommand{\bfY}{{\mathbf Y}}
\newcommand{\bfz}{{\mathbf z}}    \newcommand{\bfZ}{{\mathbf Z}}


\newcommand{\cala}{{\mathcal A}}
\newcommand{\calb}{{\mathcal B}}
\newcommand{\calc}{{\mathcal C}}
\newcommand{\cald}{{\mathcal D}}
\newcommand{\cale}{{\mathcal E}}
\newcommand{\calf}{{\mathcal F}}
\newcommand{\calg}{{\mathcal G}}
\newcommand{\calh}{{\mathcal H}}
\newcommand{\cali}{{\mathcal I}}
\newcommand{\calj}{{\mathcal J}}
\newcommand{\calk}{{\mathcal K}}
\newcommand{\call}{{\mathcal L}}
\newcommand{\calm}{{\mathcal M}}
\newcommand{\caln}{{\mathcal N}}
\newcommand{\calo}{{\mathcal O}}
\newcommand{\calp}{{\mathcal P}}
\newcommand{\calq}{{\mathcal Q}}
\newcommand{\calr}{{\mathcal R}}
\newcommand{\cals}{{\mathcal S}}
\newcommand{\calt}{{\mathcal T}}
\newcommand{\calu}{{\mathcal U}}
\newcommand{\calv}{{\mathcal V}}
\newcommand{\calw}{{\mathcal W}}
\newcommand{\calx}{{\mathcal X}}
\newcommand{\caly}{{\mathcal Y}}
\newcommand{\calz}{{\mathcal Z}}


\newcommand{\scra}{{\mathscr A}}
\newcommand{\scrb}{{\mathscr B}}
\newcommand{\scrc}{{\mathscr C}}
\newcommand{\scrd}{{\mathscr D}}
\newcommand{\scre}{{\mathscr E}}
\newcommand{\scrf}{{\mathscr F}}
\newcommand{\scrg}{{\mathscr G}}
\newcommand{\scrh}{{\mathscr H}}
\newcommand{\scri}{{\mathscr I}}
\newcommand{\scrj}{{\mathscr J}}
\newcommand{\scrk}{{\mathscr K}}
\newcommand{\scrl}{{\mathscr L}}
\newcommand{\scrm}{{\mathscr M}}
\newcommand{\scrn}{{\mathscr N}}
\newcommand{\scro}{{\mathscr O}}
\newcommand{\scrp}{{\mathscr P}}
\newcommand{\scrq}{{\mathscr Q}}
\newcommand{\scrr}{{\mathscr R}}
\newcommand{\scrs}{{\mathscr S}}
\newcommand{\scrt}{{\mathscr T}}
\newcommand{\scru}{{\mathscr U}}
\newcommand{\scrv}{{\mathscr V}}
\newcommand{\scrw}{{\mathscr W}}
\newcommand{\scrx}{{\mathscr X}}
\newcommand{\scry}{{\mathscr Y}}
\newcommand{\scrz}{{\mathscr Z}}


\newcommand{\AAA}{{\mathbb A}} 
\newcommand{\BB}{{\mathbb B}}
\newcommand{\CC}{{\mathbb C}}
\newcommand{\DD}{{\mathbb D}}
\newcommand{\EE}{{\mathbb E}}
\newcommand{\FF}{{\mathbb F}}
\newcommand{\GG}{{\mathbb G}}
\newcommand{\HH}{{\mathbb H}}
\newcommand{\II}{{\mathbb I}}
\newcommand{\JJ}{{\mathbb J}}
\newcommand{\KK}{{\mathbb K}}
\newcommand{\LL}{{\mathbb L}}
\newcommand{\MM}{{\mathbb M}}
\newcommand{\NN}{{\mathbb N}}
\newcommand{\OO}{{\mathbb O}}
\newcommand{\PP}{{\mathbb P}}
\newcommand{\QQ}{{\mathbb Q}}
\newcommand{\RR}{{\mathbb R}}
\newcommand{\SSS}{{\mathbb S}} 
\newcommand{\TT}{{\mathbb T}}
\newcommand{\UU}{{\mathbb U}}
\newcommand{\VV}{{\mathbb V}}
\newcommand{\WW}{{\mathbb W}}
\newcommand{\XX}{{\mathbb X}}
\newcommand{\YY}{{\mathbb Y}}
\newcommand{\ZZ}{{\mathbb Z}}


\newcommand{\tta}{\hbox{\tt a}}    \newcommand{\ttA}{\hbox{\tt A}}
\newcommand{\ttb}{\hbox{\tt b}}    \newcommand{\ttB}{\hbox{\tt B}}
\newcommand{\ttc}{\hbox{\tt c}}    \newcommand{\ttC}{\hbox{\tt C}}
\newcommand{\ttd}{\hbox{\tt d}}    \newcommand{\ttD}{\hbox{\tt D}}
\newcommand{\tte}{\hbox{\tt e}}    \newcommand{\ttE}{\hbox{\tt E}}
\newcommand{\ttf}{\hbox{\tt f}}    \newcommand{\ttF}{\hbox{\tt F}}
\newcommand{\ttg}{\hbox{\tt g}}    \newcommand{\ttG}{\hbox{\tt G}}
\newcommand{\tth}{\hbox{\tt h}}    \newcommand{\ttH}{\hbox{\tt H}}
\newcommand{\tti}{\hbox{\tt i}}    \newcommand{\ttI}{\hbox{\tt I}}
\newcommand{\ttj}{\hbox{\tt j}}    \newcommand{\ttJ}{\hbox{\tt J}}
\newcommand{\ttk}{\hbox{\tt k}}    \newcommand{\ttK}{\hbox{\tt K}}
\newcommand{\ttl}{\hbox{\tt l}}    \newcommand{\ttL}{\hbox{\tt L}}
\newcommand{\ttm}{\hbox{\tt m}}    \newcommand{\ttM}{\hbox{\tt M}}
\newcommand{\ttn}{\hbox{\tt n}}    \newcommand{\ttN}{\hbox{\tt N}}
\newcommand{\tto}{\hbox{\tt o}}    \newcommand{\ttO}{\hbox{\tt O}}
\newcommand{\ttp}{\hbox{\tt p}}    \newcommand{\ttP}{\hbox{\tt P}}
\newcommand{\ttq}{\hbox{\tt q}}    \newcommand{\ttQ}{\hbox{\tt Q}}
\newcommand{\ttr}{\hbox{\tt r}}    \newcommand{\ttR}{\hbox{\tt R}}
\newcommand{\tts}{\hbox{\tt s}}    \newcommand{\ttS}{\hbox{\tt S}}
\newcommand{\ttt}{\hbox{\tt t}}    \newcommand{\ttT}{\hbox{\tt T}}
\newcommand{\ttu}{\hbox{\tt u}}    \newcommand{\ttU}{\hbox{\tt U}}
\newcommand{\ttv}{\hbox{\tt v}}    \newcommand{\ttV}{\hbox{\tt V}}
\newcommand{\ttw}{\hbox{\tt w}}    \newcommand{\ttW}{\hbox{\tt W}}
\newcommand{\ttx}{\hbox{\tt x}}    \newcommand{\ttX}{\hbox{\tt X}}
\newcommand{\tty}{\hbox{\tt y}}    \newcommand{\ttY}{\hbox{\tt Y}}
\newcommand{\ttz}{\hbox{\tt z}}    \newcommand{\ttZ}{\hbox{\tt Z}}

\newcommand{\phm}{\phantom}
\newcommand{\ds}{\displaystyle }
\newcommand{\smallstrut}{\vphantom{\vrule height 3pt }}
\def\bdm #1#2#3#4{\left(
\begin{array} {c|c}{\ds{#1}}
 & {\ds{#2}} \\ \hline
{\ds{#3}\vphantom{\ds{#3}^1}} &  {\ds{#4}}
\end{array}
\right)}
\newcommand{\wtd}{\widetilde }
\newcommand{\bsl}{\backslash }
\newcommand{\GL}{{\mathrm{GL}}}
\newcommand{\SL}{{\mathrm{SL}}}
\newcommand{\GSp}{{\mathrm{GSp}}}
\newcommand{\PGSp}{{\mathrm{PGSp}}}
\newcommand{\SP}{{\mathrm{Sp}}}
\newcommand{\SO}{{\mathrm{SO}}}
\newcommand{\SU}{{\mathrm{SU}}}
\newcommand{\Ind}{\mathrm{Ind}}
\newcommand{\Hom}{{\mathrm{Hom}}}
\newcommand{\Ad}{{\mathrm{Ad}}}
\newcommand{\Sym}{{\mathrm{Sym}}}
\newcommand{\Mat}{\mathrm{M}}
\newcommand{\sgn}{\mathrm{sgn}}
\newcommand{\trs}{\,^t\!}
\newcommand{\iu}{\sqrt{-1}}
\newcommand{\oo}{\hbox{\bf 0}}
\newcommand{\ono}{\hbox{\bf 1}}
\newcommand{\smallcirc}{\lower .3em \hbox{\rm\char'27}\!}
\newcommand{\bAf}{\bA_{\hbox{\eightrm f}}}
\newcommand{\thalf}{{\textstyle{\frac12}}}
\newcommand{\shp}{\hbox{\rm\char'43}}
\newcommand{\Gal}{\operatorname{Gal}}
\newcommand{\St}{\mathrm{St}}
\newcommand{\bdel}{{\boldsymbol{\delta}}}
\newcommand{\bchi}{{\boldsymbol{\chi}}}
\newcommand{\bgam}{{\boldsymbol{\gamma}}}
\newcommand{\bome}{{\boldsymbol{\omega}}}
\newcommand{\bpsi}{{\boldsymbol{\psi}}}
\newcommand{\GK}{\mathrm{GK}}
\newcommand{\EGK}{\mathrm{EGK}}
\newcommand{\ord}{\mathrm{ord}}
\newcommand{\diag}{\mathrm{diag}}
\newcommand{\ua}{{\underline{a}}}
\newcommand{\ZZn}{\ZZ_{\geq 0}^n}

\newtheorem{theorem}{Theorem}[section]
\newtheorem{lemma}[theorem]{Lemma}
\newtheorem{proposition}[theorem]{Proposition}
\newtheorem{corollary}[theorem]{Corollary}
\newtheorem{conjecture}[theorem]{Conjecture}
\newtheorem{definition}[theorem]{Definition}
\newtheorem{remark}[theorem]{{\bf Remark}}

%

\def\mattwono(#1;#2;#3;#4){\begin{array}{cc}
                               #1  & #2 \\
                               #3  & #4
                                      \end{array}}

\def\mattwo(#1;#2;#3;#4){\left(\begin{matrix}
                               #1 & #2 \\
                               #3  & #4
                                      \end{matrix}\right)}
 \def\smallmattwo(#1;#2;#3;#4){\left(\begin{smallmatrix}
                               #1 & #2 \\
                               #3  & #4
                                      \end{smallmatrix}\right)}                                     
                                      
 \def\matthree(#1;#2;#3;#4;#5;#6;#7;#8;#9){\left(\begin{matrix}
                               #1 & #2  & #3\\
                               #4  & #5 & #6\\
                               #7  & #8 &#9 
                                      \end{matrix}\right)}                                     
                                      
\def\mattwo(#1;#2;#3;#4){\left(\begin{matrix}
                               #1 & #2 \\
                               #3  & #4
                                      \end{matrix}\right)}  

\def\rowthree(#1;#2;#3){\begin{matrix}
                               #1   \\
                               #2  \\
                               #3
                                      \end{matrix}}  
\def\columnthree(#1;#2;#3){\begin{matrix}
                               #1   &   #2  &  #3
                                      \end{matrix}}  
                                      
\def\rowfive(#1;#2;#3;#4;#5){\begin{array}{lllll}
                               #1   \\
                               #2  \\
                               #3 \\
                               #4 \\
                               #5                              
                                      \end{array}} 

\def\columnfive(#1;#2;#3;#4;#5){\begin{array}{lllll}
                               #1   &   #2  &  #3 & #4 & #5
                                \end{array}}

\def\mattwothree(#1;#2;#3;#4;#5;#6){\begin{matrix}
                               #1 & #2  & #3  \\
                               #4 & #5  & #6
                                      \end{matrix}}  
\def\matthreetwo(#1;#2;#3;#4;#5;#6){\begin{array}{lc}
                               #1  & #2  \\
                               #3  & #4 \\
                               #5  & #6
                                      \end{array}}  
\def\columnthree(#1;#2;#3){\begin{matrix}
                               #1 & #2 & #3  
                                  \end{matrix}}  
\def\rowthree(#1;#2;#3){\begin{matrix}
                               #1 \\
                                #2 \\
                                #3  
                                  \end{matrix}}  
\def\smallddots{\mathinner
{\mskip1mu\raise3pt\vbox{\kern7pt\hbox{.}}
\mskip1mu\raise0pt\hbox{.}
\mskip1mu\raise-3pt\hbox{.}\mskip1mu}}

\begin{abstract}
For a cuspidal Hecke eigenform $F$ for $Sp_n(\ZZ)$ and a Dirichlet character $\chi$ let 
$L(s,F,\chi,\St)$ be the standard $L$-function of $F$ twisted by $\chi$.
In \cite{Boecherer17}, B\"ocherer showed the boundedness of denominators 
of the algebraic part of $L(m,F,\chi,\St)$ at a critical point $m$ when $\chi$ varies.  In this paper, we give  a refined version of his  result.  We  also prove a similar result for the products of Hecke $L$-functions of primitive forms for $SL_2(\ZZ)$. 
\end{abstract}
\section{Introduction}
Let $\varGamma^{(n)}=Sp_n(\ZZ)$ be the Siegel modular group of genus $n$. 
For a cuspidal Hecke eigenform $F$ for $\varGamma^{(n)}$ and a Dirichlet character $\chi$ let 
$L(s,F,\chi,\St)$ be the standard $L$-function of $F$ twisted by $\chi$.
In \cite{Boecherer17}, B\"ocherer showed the boundedness of denominators 
of the algebraic part of $L(m,F,\chi,\St)$ at a critical point $m$ when $\chi$ varies (cf. Remark \ref{rem.main-result}).
 To prove this, B\"ocherer used congruence of Fourier coefficients of modular forms. In this paper, we give a refined version of the above result without using congruence. We state our main results more precisely. Let $M_k(\varGamma^{(n)})$ be the space of modular forms of weight $k$ for $\varGamma^{(n)}$, and $S_k(\varGamma^{(n)})$ its subspace consisting of cusp forms. We suppose that $k \ge n+1$. 
Let $F_1,\ldots, F_e$ be a basis of the space  $M_k(\varGamma^{(n)})$ consisting of Hecke eigenforms such that $F_1=F$. 
Let $L_{n,k}$ be the composite field of $\QQ(F_1), \cdots,\QQ(F_{e-1})$ and $\QQ(F_e)$. 
Let $\widetilde \frkE_F'$ be the ideal of  $L_{n,k}$ generated by all $\prod_{i=2}^{e}(\lambda_F(T_{i-1})-\lambda_{F_i}(T_{i-1}))$'s \ $(T_1,\ldots,T_{e-1} \in {\bf L}_n')$ and put $\widetilde \frkE_F =\widetilde \frkE_F' \cap \QQ(F)$,
where ${\bf L}_n'$ is the Hecke algebra for the Hecke pair $(  GSp_n^+(\QQ) \cap M_{2n}(\ZZ), \varGamma^{(n)})$.  Then, by Theorem \ref{th.mult-one}, 
$\widetilde \frkE_F'$ is a non-zero ideal, and therefore  $\widetilde \frkE_F$ is a non-zero ideal of $\QQ(F)$.
 Let $\frkI(l,F,\chi)$ be a certain fractional ideal of $\QQ(F,\chi)$ associated with the value $L(l,F,\chi,\St)$ as defined in Section 2, where $\QQ(F,\chi)$ is the field generated over the Hecke field $\QQ(F)$ of $F$ by all the values of $\chi$. 
 Then we prove that we have 
\[\frkI(m,F,\chi) \subset  \langle  (C_{n,k}\widetilde \frkE_F)^{-1} \rangle_{\frkO_{\QQ(F,\chi)}[N^{-1}]}\]
for any positive integer $m \le k-n$ and primitive character $\chi$ mod $N$  satisfying a certain condition, where $C_{n,k}$ is a positive integer depending  only on $k$ and $n$. (For a precise statement, see Theorem \ref{th.main-result}). By this we easily see the following result (cf. Corollary \ref{cor.main-result1-2}):

\bigskip

{\it Let $\calp_F$ be the set  of prime ideals $\frkp$ of $\QQ(F)$ such that 
\[\ord_{\frkp}(N_{\QQ(F,\chi)/\QQ(F)}(\frkI(m,F,\chi))) <0 \]
for some positive integer $m \le k-n$ and primitive character $\chi$ with conductor  not divisible by $\frkp$ satisfying the above condition.
Then $\calp_F$ is a finite set. Moreover, there exists a positive integer $r=r_{n,k}$ depending only on $n$ and $k$ such that  we have 
\[\ord_{\frkq}(\frkI(m,F,\chi)) \ge -r[\QQ(F,\chi):\QQ(F)]\]
for any prime ideal $\frkq$ of $\QQ(F,\chi)$  lying above a prime ideal in $\calp_F$ and  positive integer $m \le k-n$ and  primitive  character $\chi$ with conductor not divisible by $\frkq$ satisfying the above condition.}

\bigskip

We have also similar results for the products of Hecke $L$ functions of primitive forms for $SL_2(\ZZ)$. 

The author thanks Shih-Yu Chen, Tobias Keller, Takashi Ichikawa, and Masataka Chida  for valuable discussions. He also thanks the referee for many useful comments.

{\bf Notation}
 We denote by $\ZZ_{> 0}$ and $\ZZ_{\ge 0}$ the set of positive integers and the set of non-negative integers, respectively.                                            

For a commutative ring $R$, let $M_{mn}(R)$ denote  the set of $m \times n$ matrices with entries                                     
 in $R$, and especially write $M_n(R)=M_{nn}(R)$. 
We often identify an element $a$ of $R$ and the matrix $(a)$ of size 1 whose component is $a$. If $m$ or $n$ is 0, we understand an element of $M_{mn}(R)$ is the {\it empty matrix} and denote it by $\emptyset$. Let $GL_n(R)$ be the group consisting of all invertible elements of $M_n(R)$, and ${\rm Sym}_n(R)$ the set of symmetric matrices of size $n$ with entries in $R$.  
Let $K$ be a field of characteristic $0$, and $R$ its subring. We say that an element  $A $ of $\mathrm{Sym}_n(R)$ is non-degenerate if the determinant  $\det A$ of $A$ is non-zero. For a subset $S$ of $\mathrm{Sym}_n(R)$, we denote by
$S^{{\rm{nd}}}$ the subset of $S$ consisting of non-degenerate matrices.  For a subset $S$ of $\mathrm{Sym}_n(\RR)$ we denote by $S_{\ge 0}$  (resp. $S_{>0}$) the subset of $S$ consisiting of semi-positive definite (resp. positive definite) matrices. We say that an element  $A=(a_{ij})$ of $\mathrm{Sym}_n(K)$ is half-integral if $a_{ii} \ (i=1,...,n)$ and $2a_{ij} \ (1 \le i \not= j \le n)$ belong to $R$. We denote by $\calh_n(R)$ the set of half-integral matrices of size $n$ over $R$. 
We note that $\calh_n(R)=\mathrm {Sym}_n(R)$ if $R$ contains the inverse of $2$. 
 For an $(m,n)$ matrix $X$ and an $(m,m)$ matrix $A$, we write $A[X] ={}^tXAX$, where $^t X$ denotes the transpose of $X$.  
Let $G$ be a subgroup of $GL_n(R)$. Then we say that two elements $B$ and $B'$ in $\mathrm{Sym}_n(R)$  are $G$-equivalent if there is an element $g$ of $G$ such that $B'=B[g]$. 
For two square matrices $X$ and $Y$ we write $X \bot Y =\mattwo(X;O;O;Y)$. We often write $x \bot Y$ instead of $(x) \bot Y$ if $(x)$ is  a matrix of size 1. 
We denote by $1_m$ the unit matrix of size $m$ and by $O_{m,n}$ the zero matrix of type $(m,n)$. We sometimes abbreviate $O_{m,n}$ as $O$ if there is no fear of  confusion.

Let $\frkb$ be a subset of $K$. We then denote by $\langle  \frkb \rangle_R$ the $R$-sub-module of $K$ generated by $\frkb$.  For a non-zero integer $M$, we  put  
\[R[M^{-1}]=\{a M^{-s} \ | \ a \in R, \ s \in \ZZ_{\ge 0}\}\] 
Let $K$ be an algebraic number filed, and $\frkO=\frkO_K$  the ring of integers in $K.$ For  a prime ideal ${\frkp}$ of ${\frkO},$ we denote by ${\frkO}_{({\frkp})}$ the localization of ${\frkO}$ at ${\frkp}$ in $K.$ Let $\frkA$ be a fractional ideal in $K.$ If $\frkA={\frkp}^e \frkB$ with a fractional ideal $\frkB$ of $K$ such that ${\frkO}_{({\frkp})}\frkB={\frkO}_{({\frkp})}$  we write ${\rm ord}_{{\frkp}}(\frkA)=e.$
We make the convention that $\ord_\frkp(\frkA)=\infty$ if $\frkA=\{0\}$. We simply write ${\rm ord}_{{\frkp}}(c)= {\rm ord}_{{\frkp}}((c))$ for $c \in K.$ For an ideal $\frkI$ of $K$, let $\frkI^{-1}$ the inverse ideal of $\frkI$.

For a complex number $x$ put ${\bf e}(x)=\exp(2\pi \sqrt{-1}x)$.

\section{Main result}
For a subring $K$ of $\RR$ put  
\[GSp_n^+(K)=\{\gamma \in GL_{2n}(K)  \ | \  J_n[\gamma]= \kappa(\gamma)J_n \
{\rm with \ some} \ \kappa(\gamma) > 0 \},\]
 and
\[Sp_n(K)=\{\gamma \in GSp_n^+(K)  \ | \  J_n[\gamma]=J_n \}, \]
 where $J_n=\mattwo(O_n;-1_n;1_n;O_n)$. 
In particular, put $\varGamma^{(n)}=Sp_n(\ZZ)$ as in  Introduction.
We sometimes write an element $\gamma$ of $GSp_n^+(K)$ as 
$\gamma=\mattwo(A;B;C;D)$ with $A,B,C,D \in M_n(K).$ We define subgroups $\varGamma^{(n)}(N)$ and $\varGamma_0^{(n)}(N)$ of $\varGamma^{(n)}$ as
$$\varGamma^{(n)}(N)=\{\gamma \in \varGamma^{(n)} \ | \ \gamma \equiv 1_{2n}  \text{ mod } N \},$$
and
$$\varGamma_0^{(n)}(N)=\{\mattwo(A;B;C;D) \in \varGamma^{(n)} \ | \  C \equiv O_n \text{ mod }  N \}.$$
Let ${\bf H}_n$ be Siegel's upper half space of degree $n$. We write 
$\gamma(Z)=(AZ+B)(CZ+D)^{-1}$ and $j(\gamma,Z)=\det (CZ+D)$ for $\gamma=\mattwo(A;B;C;D) \in GSp_n^+({\RR})$ and $Z \in {\bf H}_n$. We write $F|_k\gamma(Z)=(\det \gamma)^{k/2} j(\gamma,Z)^{-k}f(\gamma (Z))$ for $\gamma \in GSp_n^+({\RR})$ and a $C^{\infty}$-function  $F$ on ${\bf H}_n.$ We simply write $F|\gamma$ for $F|_k\gamma $ if there is no confusion. 
We say that a subgroup $\varGamma$ of $\varGamma^{(n)}$ is a congruence subgroup if $\varGamma$ contains $\varGamma^{(n)}(N)$ with some $N$. We also say that a character $\eta$ of a congruence subgroup $\varGamma$ is a congruence character if its kernel is a congruence subgroup. For a positive integer $k$, a congruence subgroup $\Gamma$ and its congruence character $\eta$, we denote by $M_k(\Gamma,\eta)$  (resp. $M_k^{\infty}(\varGamma,\eta)$) the space of holomorphic (resp. $C^{\infty}$-) modular forms 
 of weight $k$ and character $\eta$ for $\Gamma$. We denote by $S_k(\varGamma,\eta)$ the subspace of $M_k(\varGamma,\eta)$ consisting of cusp forms. If $\eta$ is the trivial character, we abbreviate $M_k(\varGamma,\eta)$ and $S_k(\varGamma,\eta)$ as $M_k(\varGamma)$ and $S_k(\varGamma)$, respectively. 
Let $dv$ denote the invariant volume element on ${\bf H}_n$ defined by 
\[dv=\det ({\rm Im}(Z))^{-n-1} \wedge_{1 \le j \le l \le n}(dx_{jl} \wedge dy_{jl}).\]
 Here for $Z \in {\bf H}_n$ we write $Z= (x_{jl}) + \sqrt{-1} (y_{jl})$ with real matrices $(x_{jl})$ and $(y_{jl}).$
For two elements  $F$ and $G$ of $M_k^{\infty}(\varGamma,\eta)$, we define 
the Petersson scalar product $\langle F,G\rangle  _{\varGamma}$ of $F$ and $G$ by
$$\langle F,G\rangle _{\varGamma}=\int_{\varGamma \backslash {\bf H}_n} F(Z)\overline {G(Z)} \det ({\rm Im}(Z))^k dv,$$
provided the integral converges. For $i=1,2$, let $\varGamma_i$ be a congruence subgroup with a congruence character $\eta_i$.
Then there exists a congruence subgroup $\varGamma$ contained in $\varGamma_1 \cap \varGamma_2$ and its congruence character $\eta$ such that $\eta_1|\Gamma=\eta_2|\Gamma=\eta$. Then we have $M_k^\infty(\varGamma,\eta) \supset M_k^\infty(\varGamma_i,\eta_i)$. For elements $F_1$ and $F_2$ of $M_k^\infty(\varGamma_,\eta_1)$ and $M_k^\infty(\varGamma_2,\eta_2)$, respectively, the value  $[\varGamma^{(n)}:\varGamma]^{-1}\langle F_1, \ F_2 \rangle_{\varGamma}$  does not depend on the choice of $\varGamma$. 
We denote it by $\langle F_1, \ F_2 \rangle$. 

Let $F$ be an element of $M_k(\varGamma,\eta)$. Then, $F$ has the following Fourier expansion:
\[F(Z)=\sum_{A \in \calh_n(\ZZ)_{\ge 0}} c_F\bigl({A \over N}\bigr){\bf e}\bigl(\mathrm{tr}({AZ \over N})\bigr)\]
with some positive integer $N$, where $\mathrm{tr}$ denotes the trace of a matrix. 
For a subset  $S$ of $\CC$, we denote by $M_k(\varGamma,\eta)(S)$ the set of elements $F$ of $M_k(\varGamma,\eta)$  such that $c_F({A \over N}) \in S$ for all $A \in \calh_n(\ZZ)_{\ge 0}$, and put $S_k(\varGamma,\eta)(S)=M_k(\varGamma,\eta)(S) \cap S_k(\varGamma,\eta)$. If $R$ is a commutative ring, and $S$ is an $R$ module, then $M_k(\varGamma,\eta)(S)$ and $S_k(\varGamma,\eta)(S)$ are $R$-modules.

 For a Dirichlet character $\phi$ modulo $N$, let  $\widetilde \phi$  denote   the character of $\varGamma_0^{(n)}(N)$  defined by $\varGamma_0^{(n)}(N) \ni \left(\begin{matrix} A & B \\C & D
\end{matrix}\right) \mapsto \phi(\det D)$, and we write $M_k(\varGamma_0^{(n)}(N),\phi)$ for $M_k(\varGamma_0^{(n)}(N),\widetilde \phi)$, and so on.

We denote by ${\bf L}_n={\bf L}_{\QQ}(GSp_n^+(\QQ), \varGamma^{(n)})$ be the Hecke ring over $\QQ$ associated with the Hecke pair $(GSp_n^+(\QQ),\varGamma^{(n)}))$, and by
${\bf L}_n'={\bf L}_{\ZZ}(GSp_n^+(\QQ) \cap M_{2n}(\ZZ),\varGamma^{(n)})$ be the Hecke ring over $\ZZ$ associated with the Hecke pair $(GSp_n^+(\QQ) \cap M_{2n}(\ZZ),\varGamma^{(n)})$. For a Hecke eigenform $F$, we denote by $\QQ(F)$ the field generated over $\QQ$ by the eigenvalues of all Hecke operators $T \in {\bf L}_n$ with respect to $F$, and call it the Hecke field of $F$. For Dirichlet characters $\chi_1,\ldots,\chi_r$, we denote by $\QQ(\chi_1,\ldots,\chi_r)$ the field generated over $\QQ$ by all the values of $\chi_1,\ldots,\chi_r$, and by $\QQ(F,\chi_1,\ldots,\chi_r)$ the composite field of $\QQ(F)$ and $\QQ(\chi_1,\ldots,\chi_r)$.
For a Hecke eigenform $F$ in $S_k(\varGamma_0^{(n)}(N))$  and a Dirichlet character $\chi$ let
$L(s,F,\mathrm{St},\chi)$ be the standard $L$ function of $F$ twisted by $\chi$. For a Dirichlet character $\chi$,  we put  $\delta_{\chi}=0$ or $1$ according as $\chi(-1)=1$ or $\chi(-1)=-1$.
Assume that $\chi$ is primitive, and for any positive integer $m \le k-n$ such that $m-n \equiv \delta_{\chi} \text{ mod } 2$ 
define $\Lambda(m,F,\chi,\St)$ as 
\[\Lambda(m,F,\chi,\St)={\chi(-1)^n
\Gamma(m)\prod_{i=1}^n \Gamma(2k-n-i)L(m,F, {{\mathrm St}},\chi) \over \langle F, F \rangle \pi^{-n(n+1)/2+nk+(n+1)m}\sqrt{-1}^{m+n}\tau(\chi)^{n+1}}.\]
$\tau(\chi)$ is the Gauss sum of $\chi$.
 For a Dirichlet character $\chi$ let $m_{\chi}$ be the conductor of $\chi$.
The following proposition is essentially due to [\cite{Boecherer-Schmidt00}, Appendix, Theorem].
\begin{proposition} \label{prop.algebraicity-standard-L}
Let $F$ be a Hecke eigenform in $S_k(\varGamma^{(n)})(\QQ(F))$.
Let $m$ be a positive integer not greater than $k-n$ and $\chi$  a  primitive character $\chi$ satisfying the following condition:

{\rm (C)}  $m-n \equiv \delta_{\chi} \text{ mod } 2$, and  $m >1$ if $n>1, \ n \equiv 1 \ \text{ mod }  4$ and $\chi^2$ is trivial.

Then $\Lambda(m,F,\chi,\St)$ belongs to $\QQ(F,\chi)$.
\end{proposition}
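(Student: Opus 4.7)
The plan is to reduce the claim to the algebraicity result of \cite{Boecherer-Schmidt00}, Appendix Theorem, whose proof proceeds via the doubling method of B\"ocherer and Piatetski-Shapiro--Rallis. First, I would recall the doubling integral representation that expresses $L(s,F,\chi,\St)$, up to explicit archimedean and finite local factors, as a Petersson-type integral
\[
\int_{\varGamma^{(n)}\backslash\bfH_n} F(Z)\,\cale\!\bigl(Z\bot W;\,s,\chi\bigr)\,\det(\mathrm{Im}\,Z)^k\,dv_Z \;=\;(\text{factor})\cdot L(s',F,\chi,\St)\cdot F(W),
\]
where $\cale(\,\cdot\,;s,\chi)$ is a Siegel-type Eisenstein series on $\bfH_{2n}$ attached to $\chi$ and the integral is unfolded against the diagonal embedding $(Z,W)\mapsto Z\bot W$. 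Taking the inner product of both sides with $F$ in the $W$-variable reduces the $L$-value to a Petersson inner product of $F$ against a pulled-back Eisenstein series.

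Second, the point $s=m$ is arithmetically critical precisely under the condition $m-n\equiv\delta_{\chi}\pmod 2$ of (C); at this point, after an appropriate normalization, the Eisenstein series $\cale$ is nearly holomorphic with Fourier coefficients lying in $\QQ(\chi)$ modulo a transcendental factor of shape $\pi^{\,A}\cdot \tau(\chi)^{n+1}$ for an explicit exponent $A$. The supplementary hypothesis $m>1$ in the excluded case $n>1$, $n\equiv 1\pmod 4$, $\chi^2=\mathbf 1$ is exactly what avoids the single value at which this Eisenstein series acquires a pole, so the formula above still makes sense there.

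Third, combining this with the hypothesis $F\in S_k(\varGamma^{(n)})(\QQ(F))$, evaluation of the Petersson inner product yields
\[
L(m,F,\chi,\St)\;\in\;\QQ(F,\chi)\cdot \langle F,F\rangle \cdot (\text{explicit transcendental factor}),
\]
and dividing by $\langle F,F\rangle$ and by the transcendental factor gives $\Lambda(m,F,\chi,\St)\in\QQ(F,\chi)$, as required.

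The main technical obstacle is bookkeeping: one must verify that the specific normalization chosen in the definition of $\Lambda$, namely
\[
\chi(-1)^n\,\Gamma(m)\prod_{i=1}^{n}\Gamma(2k-n-i)\cdot \pi^{-n(n+1)/2+nk+(n+1)m}\cdot\sqrt{-1}^{\,m+n}\cdot\tau(\chi)^{n+1},
\]
coincides, up to a factor in $\QQ(F,\chi)^{\times}$, with the transcendental part extracted in \cite{Boecherer-Schmidt00}. This amounts to matching the archimedean zeta integral at $s=m$, the $\Gamma$-factors of the completed standard $L$-function, and the period-and-sign factors arising from the Fourier expansion of $\cale$. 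No further analytic input beyond the appendix of \cite{Boecherer-Schmidt00} is needed; the proposition follows once this identification is carried out.
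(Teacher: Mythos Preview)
Your proposal is correct and aligns with the paper's treatment: the paper does not give an independent proof but simply attributes the result to \cite{Boecherer-Schmidt00}, Appendix, Theorem, and your outline accurately sketches the doubling-method argument underlying that reference, including the role of condition~(C) in avoiding the pole of the Eisenstein series and the normalization bookkeeping needed to land in $\QQ(F,\chi)$.
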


 Let $\calv$ be a subspace of $M_k(\varGamma^{(n)})$. We say that a multiplicity one holds for $\calv$ if 
any Hecke eigenform in $\calv$ is uniquely determined up to constant multiple  by its Hecke eigenvalues.
\begin{theorem}
\label{th.mult-one}
Suppose that $k \ge n+1$. Then
a  multiplicity one theorem holds for $S_k(\varGamma^{(n)})$.
\end{theorem}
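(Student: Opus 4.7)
The plan is to exhibit $S_k(\varGamma^{(n)})$ as an orthogonal direct sum of joint Hecke eigenspaces for ${\bf L}_n$, and then to argue that each such eigenspace is one-dimensional. First I would invoke the standard facts that ${\bf L}_n$ is commutative and that every $T \in {\bf L}_n$ acts as a self-adjoint operator on the finite-dimensional space $S_k(\varGamma^{(n)})$ with respect to the Petersson inner product $\langle\cdot,\cdot\rangle$. These ingredients together yield an orthogonal decomposition $S_k(\varGamma^{(n)}) = \bigoplus_\lambda V_\lambda$, where $\lambda$ runs through the characters of ${\bf L}_n$ that occur and $V_\lambda$ is the corresponding joint eigenspace. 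In particular, $S_k(\varGamma^{(n)})$ already admits a basis of orthogonal Hecke eigenforms.

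It therefore suffices to show that $\dim V_\lambda \le 1$ for every $\lambda$. Given eigenforms $F,G \in V_\lambda$, I would pass to the cuspidal automorphic representations $\pi_F, \pi_G$ of $Sp_n(\AAA)$ generated by their adelic lifts. At every finite prime $p$, the Satake isomorphism converts the restriction of $\lambda$ to the local Hecke algebra into the isomorphism class of the unramified local component $\pi_{F,p}$; the archimedean component is the holomorphic discrete series of scalar weight $k$. Hence $\pi_F \simeq \pi_G$ as abstract representations. Within any such $\pi$, the subspace cut out by being $Sp_n(\ZZ_p)$-fixed at every finite $p$ and by transforming in the highest-weight $K_\infty$-type associated to holomorphic weight-$k$ forms is the tensor product of one-dimensional local spaces, hence itself one-dimensional.

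The main obstacle is the step of converting an isomorphism of the abstract representations $\pi_F\simeq\pi_G$ into a proportionality of the classical forms $F$ and $G$. This requires strong multiplicity one for the cuspidal spectrum of $Sp_n(\AAA)$, which is genuinely delicate for symplectic groups but does hold for the cuspidal representations arising from holomorphic Siegel cusp forms for $\varGamma^{(n)}$ of weight $k \ge n+1$; I would cite the relevant case of Arthur's endoscopic classification of the discrete spectrum of $Sp_n$, or, in low rank, more classical references from the Hecke theory of Siegel modular forms. With this input, $F$ and $G$ both lie in the one-dimensional subspace of $\pi_F$ cut out above, so $G$ is a scalar multiple of $F$, yielding the theorem.
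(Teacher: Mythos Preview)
Your outline is correct and is essentially the same approach the paper takes: the paper's proof is simply a citation of Chenevier--Lannes [Corollary 8.5.4], with the remark that the conjectural input there (their Conjecture 8.4.22 on Arthur packets) has since been established by Arancibia--M{\oe}glin--Renard, so that the argument goes through for $k\ge n+1$. Your reduction to multiplicity one for the automorphic representation and your invocation of Arthur's endoscopic classification for $Sp_n$ is exactly the mechanism underlying the Chenevier--Lannes result, so the two proofs coincide in substance; you just make explicit the standard passage from classical Hecke eigenforms to automorphic representations that the cited reference encapsulates.
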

\begin{proof}
This is essentially due to Chenevier-Lannes [\cite{Chenevier-Lannes19}, Corollary 8.5.4].  It was proved under a more stronger assumption without using [\cite{Chenevier-Lannes19}, Conjecture 8.4.22]. As is written in the postface in that book, this conjecture has been proved \cite{Arancibia-Moeglin-Renard18}, and the same proof is available at least even when $k \ge n+1$.
\end{proof}
Let $F$ be a Hecke eigenform in $S_k(\varGamma^{(n)})$ with $k \ge n+1$. Then by Theorem \ref{th.mult-one}, we have $cF \in S_k(\varGamma^{(n)})(\QQ(F))$ with some $c \in \CC$.
Hence for $A,B \in \calh_n(\ZZ)_{>0}$ and an integer $l$ satisfying $(C)$, the value $c_F(A)\overline{c_F(B)}{\Lambda}(l,F,\mathrm{St},\chi)$ belongs to $\QQ(F)$ and does not depend on the choice of $c$.  For $A$ and $B$ and an integer $l$ put  
 $$I_{A,B}(l,F,\chi)=c_{F}(A)\overline{c_{F}(B)} {\Lambda}(l,F,\chi,\St).$$
  Let ${\mathfrak  I}(l,F,\chi)$ be  the ${\frkO}_{\QQ(F)}$-module  generated by all $I_{A,B}(l,F,\chi)$'s. Then  ${\mathfrak  I}_F(l,F,\chi)$ becomes a  fractional ideal in $\QQ(F, \chi).$  We note that it  is uniquely determined by $l$ and the system of eigenvalues of $F$. 
Let $F_1,\ldots,F_d$ be a basis of $S_k(\varGamma^{(n)})$ consisting of Hecke eigenforms such that $F_1=F$.
Let $K_{n,k}$ be the composite filed $\QQ(F_1)\cdots \QQ(F_d)$ of $\QQ(F_1),\ldots, \QQ(F_d)$. 
We denote by $\widetilde \frkD_{F}'$ the ideal of  $K_{n,k}$ generated by all
$\prod_{i=2}^d (\lambda_F(T_{i-1})-\lambda_{F_i}(T_{i-1}))$'s $( T_1,\cdots, T_{d-1} \in  {\bf L}_n'$), and 
put $\widetilde \frkD_F=\frkD_F' \cap \QQ(F)$. We make the convention that
$\widetilde \frkD_F'=\frkO_{K_{n,k}}$ if $d=1$. Moreover, let $\widetilde \frkE_F$
be the ideal of $\QQ(F)$ defined in Section 1. 
Then our first main result is as follows.
\begin{theorem}
\label{th.main-result} Let $F$ be a Hecke eigenform in $S_k(\varGamma^{(n)})$. 
Then we have 
\[  \frkI(m,F,\chi) \subset  \langle (2^{\alpha(n,k)} A_{n,k} \widetilde \frkE_{F})^{-1}\rangle_{\frkO_{\QQ(F,\chi)}[N^{-1}]}\]
 for any positive  integer $m \le k-n$  and   primitive character  $\chi$ mod $N$ satisfying the condition $\mathrm {(C)}$, where $\alpha(n,k)$ is a non-negative integer depending only on $k$ and $n$, and
$A_{n,k}=\mathrm{LCM}_{n+1 \le m \le k} \{\prod_{i=1}^n (2l-2i)(2l-2i+1)!)\}$. In particular if $m \le k-n-1$, then
\[  \frkI(m,F,\chi) \subset  \langle  (2^{\alpha(n,k)} A_{n,k} \widetilde \frkD_{F})^{-1} \rangle_{\frkO_{\QQ(F,\chi)[N^{-1}]}}\]
\end{theorem}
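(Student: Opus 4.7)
The plan is to combine Böcherer's pullback/doubling integral representation of the standard $L$-function with a Hecke operator argument that isolates $F$ among all Hecke eigenforms in $M_k(\varGamma^{(n)})$. The starting point is (essentially the Böcherer--Schmidt formula already invoked in Proposition \ref{prop.algebraicity-standard-L}): there is a holomorphic modular form $\calg_{m,\chi}$, obtained as the pullback to $\mathbf{H}_n \times \mathbf{H}_n$ (followed by a diagonal restriction) of a Siegel Eisenstein series on $\mathbf{H}_{2n}$ of weight $k$ and character built from $\chi$, evaluated at the critical value attached to $m$, such that
\[
\Lambda(m, F, \chi, \St) \;=\; c \cdot \frac{\langle F,\calg_{m,\chi}\rangle}{\langle F,F\rangle}
\]
for an explicit constant $c$. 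The Fourier coefficients of $\calg_{m,\chi}$ are given by Shimura-type formulas and have denominators bounded, in $\frkO_{\QQ(F,\chi)}[N^{-1}]$, by the factor $2^{\alpha(n,k)}A_{n,k}$ of the theorem — the product of the relevant $\Gamma$-factors becomes $A_{n,k}$ upon taking an LCM over the possible degrees, and the $2$-power $\alpha(n,k)$ absorbs the half-integrality of Fourier indices and normalizations in the Eisenstein series.

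Next I would extract the $F$-component of $\calg_{m,\chi}$ by a Hecke projection. For $m \le k-n-1$ the pullback $\calg_{m,\chi}$ lies in $S_k(\varGamma^{(n)})$, so I expand it in the basis $F_1,\dots,F_d$ of Theorem \ref{th.mult-one}: $\calg_{m,\chi}=\sum_{i=1}^d \gamma_i F_i$ with $\gamma_1=c^{-1}\Lambda(m,F,\chi,\St)$. For $m=k-n$ one must allow Eisenstein contributions, so I work with a basis $F_1,\dots,F_e$ of the full space $M_k(\varGamma^{(n)})$ — this is exactly the reason that $\widetilde\frkE_F$ (built from the full basis) appears in general, while $\widetilde\frkD_F$ (built from the cuspidal basis) suffices in the lower range. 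By multiplicity one, for each $i \ge 2$ I can choose $T_{i-1} \in \mathbf{L}_n'$ with $\lambda_F(T_{i-1}) \ne \lambda_{F_i}(T_{i-1})$; then
\[
\prod_{i=2}^{d \text{ or } e}\bigl(T_{i-1}-\lambda_{F_i}(T_{i-1})\bigr)\,\calg_{m,\chi}
\;=\;\gamma_1\!\!\prod_{i=2}^{d \text{ or } e}\!\bigl(\lambda_F(T_{i-1})-\lambda_{F_i}(T_{i-1})\bigr)\,F.
\]
Since $T_{i-1} \in \mathbf{L}_n'$ acts on Fourier expansions with coefficients that remain in $\frkO_{\QQ(F,\chi)}[N^{-1}]$ (after scaling by the bound from the first paragraph), comparing Fourier coefficients at $A\in \calh_n(\ZZ)_{>0}$ on both sides gives
\[
c_F(A)\,\Lambda(m,F,\chi,\St)\!\!\prod_{i=2}(\lambda_F(T_{i-1})-\lambda_{F_i}(T_{i-1}))\;\in\;\tfrac{1}{2^{\alpha(n,k)}A_{n,k}}\,\frkO_{\QQ(F,\chi)}[N^{-1}].
\]

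Finally I multiply by $\overline{c_F(B)}$ and vary $A,B\in \calh_n(\ZZ)_{>0}$ and $T_1,\dots,T_{d-1}$ (resp. $T_1,\dots,T_{e-1}$). Since $\widetilde\frkE_F$ (resp. $\widetilde\frkD_F$) is by definition the $\QQ(F)$-ideal generated by $\prod_i(\lambda_F(T_{i-1})-\lambda_{F_i}(T_{i-1}))$, the generators $I_{A,B}(m,F,\chi)$ of $\frkI(m,F,\chi)$ all lie in $\langle (2^{\alpha(n,k)}A_{n,k}\widetilde\frkE_F)^{-1}\rangle_{\frkO_{\QQ(F,\chi)}[N^{-1}]}$ (resp.\ with $\widetilde\frkD_F$ in the lower range), yielding the theorem. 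The main obstacle, and the place where one has to work hardest, is the arithmetic control of the pullback Eisenstein series: one has to pin down the denominator of its Fourier coefficients uniformly in $\chi$ and in the critical point $m$ to extract precisely the factors $2^{\alpha(n,k)}$ and $A_{n,k}$, and then check that $\mathbf{L}_n'$-Hecke operators (integrally defined) preserve this bound. The boundary case $m=k-n$ is a secondary subtlety, because one must then handle the Eisenstein part of $M_k(\varGamma^{(n)})$ along with the cusp forms, which explains the appearance of the larger ideal $\widetilde\frkE_F$ there.
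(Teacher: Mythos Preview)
Your strategy is exactly the paper's: express a pullback of a degree-$2n$ Eisenstein series in the Hecke eigenbasis of $M_k(\varGamma^{(n)})$, bound its Fourier-coefficient denominators, and apply $\prod_{i\ge2}(T_{i-1}-\lambda_{F_i}(T_{i-1}))$ to isolate the $F$-term (this is Lemma~\ref{lem.fundamental-lemma}). The cuspidal/non-cuspidal dichotomy explaining $\widetilde\frkD_F$ versus $\widetilde\frkE_F$ is also correct.

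Two points need repair. First, the one-variable form is not obtained by ``diagonal restriction'': one takes the $(N^2A)$-th Fourier coefficient of the two-variable pullback in the second variable, and by Theorem~\ref{th.pullback-Eisenstein} the resulting expansion already carries a factor $\overline{c_{F_i}(A)}$ in front of each $F_i$. After the Hecke projection and reading off the $B$-th coefficient one obtains $\overline{c_F(A)}\,c_F(B)\,\Lambda(m,F,\chi,\St)$ directly; there is no separate ``multiply by $\overline{c_F(B)}$'' step, which would otherwise need an unjustified integrality assumption on $c_F$. Second, and this is the genuine gap, the Fourier coefficients of the level-$1$ pullback are \emph{not} a priori in $\frkO_{\QQ(F,\chi)}[N^{-1}]$. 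The initial pullback $\cale_{2n}^{k,k-l}(Z_1,N^2A,N,\chi)$ has level $N^2$ with coefficients in $\frkO_{\QQ(\chi)}[N^{-1}]$, but tracing to level $1$ via $\sum_{\gamma\in\varGamma_0^{(n)}(N^2)\backslash\varGamma^{(n)}}(\cdot)|_k\gamma$ only yields, through the $q$-expansion principle (Proposition~\ref{prop.integrality-of-partial-Eisen}), coefficients in $\frkO_{\QQ(\chi,\zeta_N)}[N^{-1}]$. The paper therefore runs the Hecke argument over the larger field $L_{n,k}(\chi,\zeta_N)$ and only at the end descends to $\QQ(F,\chi)$ by invoking Proposition~\ref{prop.algebraicity-standard-L}, which says independently that $I_{A,B}(m,F,\chi)\in\QQ(F,\chi)$; intersecting with $\QQ(F,\chi)$ then gives the stated containment. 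Your sketch omits this descent, and without it the conclusion would only hold with $\frkO_{\QQ(F,\chi,\zeta_N)}[N^{-1}]$ in place of $\frkO_{\QQ(F,\chi)}[N^{-1}]$.
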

We will prove the above theorem in Section 5. 
\begin{corollary} \label{cor.main-result1-2} 
Let $F$ be a Hecke eigenform in $S_k(\varGamma^{(n)})$. 
Let $\calp_F$ be the set  of prime ideals $\frkp$ of $\QQ(F)$ such that 
\[\ord_{\frkp}(N_{\QQ(F,\chi)/\QQ(F)}(\frkI(m,F,\chi))) <0 \]
for some positive integer $m \le k-n$ and primitive character $\chi$ with conductor not divisible by $\frkp$ satisfying $\mathrm{(C)}$.
Then $\calp_F$ is a finite set. Moreover, there exists a positive integer $r$ such that  we have 
\[\ord_{\frkq}(\frkI(m,F,\chi)) \ge -r[\QQ(F,\chi):\QQ(F)]\]
for any prime ideal $\frkq$ of $\QQ(F,\chi)$  lying above a prime ideal in $\calp_F$ and  integer $l$ and  primitive  character $\chi$ with conductor not divisible by $\frkq$ satisfying the condition {\rm (C)}. 
\end{corollary}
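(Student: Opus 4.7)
The plan is to deduce the corollary directly from Theorem \ref{th.main-result}, treating the inclusion
\[\frkI(m,F,\chi) \subset \langle (2^{\alpha(n,k)} A_{n,k} \widetilde \frkE_{F})^{-1}\rangle_{\frkO_{\QQ(F,\chi)}[N^{-1}]}\]
as the only substantive input. Writing $\frkJ_F = 2^{\alpha(n,k)} A_{n,k} \widetilde \frkE_F$, this is a fixed nonzero ideal of $\QQ(F)$ that depends only on $F$, $n$ and $k$, since $\widetilde\frkE_F$ is nonzero by Theorem \ref{th.mult-one}.

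First I would establish the finiteness of $\calp_F$. Fix a primitive $\chi$ mod $N$ satisfying (C), let $\frkp$ be a prime of $\QQ(F)$ with $\frkp \nmid N$, and let $\frkq$ be any prime of $\QQ(F,\chi)$ lying above $\frkp$; by hypothesis $\frkq \nmid N$ as well, so localizing at $N^{-1}$ has no effect on $\ord_\frkq$. From the inclusion above I would extract
\[\ord_\frkq(\frkI(m,F,\chi)) \;\ge\; -\ord_\frkq(\frkJ_F \cdot \frkO_{\QQ(F,\chi)}) \;=\; -e(\frkq|\frkp)\,\ord_\frkp(\frkJ_F).\]
Summing the standard identity $\ord_\frkp(N_{\QQ(F,\chi)/\QQ(F)}(\mathfrak A)) = \sum_{\frkq|\frkp} f(\frkq|\frkp)\ord_\frkq(\mathfrak A)$ then yields $\ord_\frkp(N_{\QQ(F,\chi)/\QQ(F)}(\frkI(m,F,\chi))) \ge -[\QQ(F,\chi):\QQ(F)]\,\ord_\frkp(\frkJ_F)$, using $\sum_{\frkq|\frkp} e(\frkq|\frkp)f(\frkq|\frkp) = [\QQ(F,\chi):\QQ(F)]$. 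Hence if the left-hand side is negative for some admissible $(m,\chi)$ with $\frkp \nmid N$, one must have $\ord_\frkp(\frkJ_F) > 0$, i.e.\ $\frkp$ divides $\frkJ_F$. Since $\frkJ_F$ is a nonzero ideal, its prime divisors form a finite set, so $\calp_F$ is finite.

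For the uniform lower bound, set
\[r \;=\; \max_{\frkp \in \calp_F} \ord_\frkp(\frkJ_F),\]
which is a finite nonnegative integer depending only on $F$, $n$, $k$ (since $\calp_F$ is finite and $\frkJ_F \neq 0$). Let $\frkq$ be a prime of $\QQ(F,\chi)$ lying above some $\frkp \in \calp_F$, with $\frkq \nmid N$. Repeating the inequality from the previous step and using the universal bound $e(\frkq|\frkp) \le [\QQ(F,\chi):\QQ(F)]$ gives
\[\ord_\frkq(\frkI(m,F,\chi)) \;\ge\; -e(\frkq|\frkp)\,\ord_\frkp(\frkJ_F) \;\ge\; -r\,[\QQ(F,\chi):\QQ(F)],\]
which is the desired estimate.

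No step should pose a serious obstacle; the entire argument is a bookkeeping consequence of Theorem \ref{th.main-result} combined with the multiplicativity of ramification indices and residue degrees in the tower $\QQ(F) \subset \QQ(F,\chi)$. The only point requiring mild care is the role of the localization at $N^{-1}$: one must verify that the hypothesis ``conductor not divisible by $\frkp$'' (respectively $\frkq$) ensures that inverting $N$ does not artificially inflate the order at the prime in question, which is immediate since $\ord_\frkq(N) = 0$ under that hypothesis.
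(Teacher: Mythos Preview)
Your proposal is correct and follows essentially the same route as the paper: both arguments set $\frkJ_F = 2^{\alpha(n,k)}A_{n,k}\widetilde\frkE_F$, observe from Theorem~\ref{th.main-result} that any $\frkp \in \calp_F$ must divide $\frkJ_F$ (giving finiteness), and then take $r$ to be the maximal exponent in the prime factorization of $\frkJ_F$ together with the bound $\ord_\frkq(\frkp) \le [\QQ(F,\chi):\QQ(F)]$. Your version simply spells out the norm computation and the role of the localization at $N^{-1}$ in more detail than the paper does.
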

\begin{proof}
By Theorem \ref{th.main-result}, we have $\frkp | 2^{\alpha(n,k)}A_{n,k}\widetilde \frkE_F$ if $\frkp \in \calp_F$. This proves the first assertion.
Let $2^{\alpha(n,k)}A_{n,k}\widetilde \frkE_F=\frkp_1^{e_1}\cdots \frkp_s^{e_s}$ be the prime factorization of $2^{\alpha(n,k)}A_{n,k}\widetilde \frkE_F$, where 
$\frkp_1,\ldots,\frkp_s$ are distinct prime ideals and $e_1,\ldots,e_s$ are positive integers.
We note that for any prime ideal $\frkp$ of $\QQ(F)$ and prime ideal $\frkq$ of $\QQ(F,\chi)$ lying above $\frkp $ we have $\ord_{\frkq}(\frkp) \le [\QQ(F,\chi):\QQ(F)]$. Hence $r=\max \{e_i \}_{1 \le i \le s}$ satisfies the required condition in the second assertion.
\end{proof}
\begin{remark}
\label{rem.main-result}
{\rm (1)} Let 
\[ \Lambda(F,m,\chi)={
\Gamma(m)\prod_{i=1}^n \Gamma(2k-n-i)L(m,F, {{\mathrm St}},\chi) \over \langle F, F \rangle \pi^{-n(n+1)/2+nk+(n+1)m}.}\]
Then, if $m$ and $\chi$ satisfy the condition {\rm (C)}, $\Lambda(F,m,\chi)$ belongs to $\QQ(F,\chi,\zeta_N)$, where  $\QQ(F,\chi,\zeta_N)$ is the field generated over the Hecke field $\QQ(F)$ of $F$ by all the values of $\chi$ and the primitive $N$-th root $\zeta_N$ of unity. In {\rm [\cite{Boecherer17}, Theorem]}, a similar result has been proved for $\Lambda(F,m,\chi)$. Our $L$-value belongs to $\QQ(F,\chi)$, which is included in $\QQ(F,\chi,\zeta_N)$. Therefore, our result can be regarded  as a refinement of B\"ocherer's.  

\noindent 
{\rm (2)} B\"ocherer \cite{Boecherer17}  excluded the case $m =k-n$. However, we can include this case. We also note that we can get a sharper result if we restrict ourselves to the case $m <k-n$ as stated in the above theorem.

\noindent
{\rm (3)} In \cite{Boecherer17}, the main result was  formulated without assuming multiplicity one theorem. 
 However, such a formulation is now unnecessary.
\end{remark}

\section {Pullback of Siegel Eisenstein series}
To prove our main result, first we express a certain modular form  as a linear combination of Hecke eigenforms (cf. Theorem \ref{th.pullback-Eisenstein}).
We have carried out it in [\cite{Katsurada15}, Appendix], and here 
we  treat it in a more general setting. We also correct some inaccuracies in [\cite{Katsurada15}, Appendix] (cf. Remark \ref{rem.corrections}).
For a non-negative integer $m$, put
\[\Gamma_m(s)=\pi^{m(m-1)/4}\prod_{i=1}^m \Gamma(s-{i-1 \over 2}).\]
 For a Dirichlet character $\chi$ we denote by $L(s,\chi)$ the Dirichlet $L$-function associated to $\chi,$ and put
\begin{align*}
&\call_{n}(s,\chi)= \Gamma_{n}(s) \pi^{-ns} L(s,\chi) \prod_{i=1}^{[n/2]} L(2s-2i,\chi^2)\\
& \times \begin{cases} \pi^{n/2-s}\Gamma(s-n/2) & \text{ if } \text{ if } n \text{ is even}\\
1 & n \text{ is odd.} \end{cases}
\end{align*}
 Let $n,l$ and $N$ be positive integers.  For a Dirichlet character $\phi$ modulo $N$ such that $\phi(-1)=(-1)^l,$ we define the Eisenstein series $E_{n,l}^*(Z;N,\phi,s)$ by
$$E_{n,l}^*(Z;N, \phi,s)=\bigl(\det {\rm Im}(Z)\bigr)^s  \call_n(l+2s,\phi)$$
$$ \times \sum_{\gamma \in T^{(n)}(N)_\infty \backslash T^{(n)}(N)}  \phi^*(\gamma) j(\gamma,Z)^{-l}|j(\gamma,Z)|^{-2s},$$
where 
\[T^{(n)}(N)=\Bigl\{\begin{pmatrix} A & B \\ C & D \end{pmatrix} \in \varGamma^{(n)} \ | \ A \equiv O_n \text{ mod } N \Bigr\},\]
\[T^{(n)}(N)_{\infty}=\Bigl\{\begin{pmatrix} A & B \\ C & D \end{pmatrix} \in \varGamma^{(n)} \ | \ B \equiv O_n \text{ mod } N, C=O_n \Bigr\},\]
and $\phi^*(\gamma)=\phi(\det C)$ for $\gamma =\begin{pmatrix} A & B \\ C & D \end{pmatrix} \in T^{(n)}(N)$.
 Then $E_{n,l}^*(Z;N,\phi,s)$ converges absolutely as a function of $s$ if the real part of $s$ is large enough.
Moreover, it has a meromorphic continuation to the whole $s$-plane, and  it  belongs to  
$M_l^{\infty}(\varGamma_0^{(n)}(N),\phi)$. Moreover it is holomorphic and finite at $s=0$, which will be denoted by $E_{n,l}^*(Z;N,\phi)$.
In particular, if $E_{n,l}^*(Z;N,\phi)$ belongs to $M_l(\varGamma_0^{(n)}(N),\phi),$ it has the following Fourier expansion:
$$E_{n,l}^*(Z;N,\phi)=\sum_{A \in {\calh}_n(\ZZ)_{\ge 0}} c_{n,l}\left(A,N,\phi) {\bf e}({\rm tr}(AZ)\right).$$
To see the Fourier coefficient of $E_{n,l}^*(Z;N,\phi)$, we define a polynomial attached to local Siegel series.
For a prime number $p$ let $\QQ_p$ be the field of $p$-adic numbers, and $\ZZ_p$ the ring of $p$-adic integers. For an element $B \in \calh_n(\ZZ_p)$,
we define the Siegel series $b_p(B,s)$  as 
\[b_p(B,s)=\sum_{R \in \mathrm{Sym}_n(\QQ_p)/\mathrm{Sym}_n(\ZZ_p)} 
{\bf e}_p(\mathrm{tr}(BR))\nu(R)^{-s},\]
where ${\bf e}_p$ is the additive character of $\ZZ_p$ such that ${\bf e}_p(m)={\bf e}(m)$ for $m \in \ZZ[p^{-1}]$, and
$\nu_p(R)=[R\ZZ_p^n+\ZZ_p^n:\ZZ_p^n]$.
We define $\chi_p(a)$ for $a \in {\QQ}^{\times}_p $ as follows:
 $$\chi_p(a):=  
 \left\{\begin{array}{cl} 
  +1            & {\rm if } \ {\QQ}_p(\sqrt {a})={\QQ}_p ,\\
  -1            & {\rm if}  \ {\QQ}_p(\sqrt {a})/{\QQ}_p \ {\rm is \ quadratic \ unramified},\\
  0             & {\rm if}  \ {\QQ}_p(\sqrt {a})/{\QQ}_p \ {\rm is \ quadratic \ ramified}. 
\end{array}\right.$$
For an element  $B \in \calh_n(\ZZ_p)^{\rm nd}$ with $n$ even,  we define $\xi_p(B)$ by 
$$\xi_p(B):=\chi_p((-1)^{n/2}\det B).$$ 
For a nondegenerate half-integral matrix $B$ of size $n$ over ${\ZZ}_p$ define a polynomial $\gamma_p(B,X)$ in $X$ by
$$\gamma_p(B,X):=
\left\{
\begin{array}{ll}
(1-X)\prod_{i=1}^{n/2}(1-p^{2i}X^2)(1-p^{n/2}\xi_p(B)X)^{-1} & \ {\rm if} \ n \ {\rm is \ even}, \\
(1-X)\prod_{i=1}^{(n-1)/2}(1-p^{2i}X^2) & \ {\rm if} \ n \ {\rm is \ odd}.
\end{array}
\right.$$
Then it is well known that  there exists a unique polynomial $F_p(B,X)$ in $X$ over   ${\ZZ}$  with constant term $1$ such that 
$$b_p(B,s) =\gamma_p(B,p^{-s})F_p(B,p^{-s})$$ (e.g. \cite{Katsurada99}). More precisely, we have the following proposition.
\begin{proposition} \label{prop.precise-Kitaoka-polynomial}
Let $B \in \calh_m(\ZZ_p)^{\mathrm{nd}}$. Then there exists a polynomial $H_p(B,x)$ in $X$ over $\ZZ$ such that 
\[F_p(B,X)=H_p(B,p^{[(m+1)/2]}X).\]
\end{proposition}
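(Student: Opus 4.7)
Plan: The claim is equivalent to showing that if we write $F_p(B, X) = \sum_{k \ge 0} a_k X^k$ with $a_k \in \ZZ$ and $a_0 = 1$, then $p^{k\nu_m} \mid a_k$ for every $k$, where $\nu_m := [(m+1)/2]$; once this is done, taking $H_p(B, Y) := \sum_k (a_k/p^{k\nu_m})Y^k$ yields a polynomial in $\ZZ[Y]$ with the required property. The plan is to argue by induction on $m$, relying on the explicit reduction formula for $F_p(B, X)$ established in \cite{Katsurada99} (which is cited immediately before the proposition).

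First I would reduce, via the $GL_m(\ZZ_p)$-invariance of $F_p$, to the case where $B$ is in Jordan normal form over $\ZZ_p$. The base case $m = 1$ is the explicit computation $F_p((b), X) = \sum_{i=0}^{v_p(b)}(pX)^i$, giving $H_p((b), Y) = \sum_i Y^i$ and matching $\nu_1 = 1$. I would also treat $m = 2$ separately using Kitaoka's explicit rank-two formula, since it is cleaner to induct in steps of two: we have $\nu_m - \nu_{m-2} = 1$ independently of parity, whereas $\nu_m - \nu_{m-1}$ alternates between $0$ and $1$ depending on the parity of $m$.

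For the inductive step, I would apply Katsurada's reduction formula, which expresses $F_p(B, X)$ for $B \in \calh_m(\ZZ_p)^{\mathrm{nd}}$ as a finite $\ZZ$-linear combination of products of explicit polynomial factors with $F_p(B', X)$ for matrices $B'$ of size $m - 2$ obtained by splitting off a rank-two Jordan block from $B$. The core task is to rewrite each term as a polynomial in the single variable $Y = p^{\nu_m} X$ with integer coefficients, using the inductive hypothesis $F_p(B', X) = H_p(B', p^{\nu_{m-2}}X)$ and a short case analysis on the Jordan type of the separated block. The main obstacle will be the fine bookkeeping of $p$-powers in this re-expansion, since one must show that the explicit polynomial prefactors, when combined with $H_p(B', p^{\nu_{m-2}}X)$, absorb exactly the extra factor of $p^{\nu_m - \nu_{m-2}} = p$ needed to rewrite everything in $Y$. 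As both a consistency check and a simplification, I would use the functional equation of $F_p(B, X)$ (symmetry $a_k \leftrightarrow a_{e-k}$ up to a precise power of $p^{(m+1)/2}$, where $e = \deg F_p$) to reduce the verification to $k \le e/2$.
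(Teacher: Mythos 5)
The paper's own proof of this proposition is a one-line citation: the divisibility you reformulate the statement as --- writing $F_p(B,X)=\sum_k a_kX^k$, the claim that $p^{k[(m+1)/2]}\mid a_k$ for all $k$ --- is exactly the content of Kitaoka's Theorem~2 in \cite{Kitaoka84}, so the author simply quotes it. Your opening reduction is correct, your base case $F_p((b),X)=\sum_{i=0}^{\ord_p(b)}(pX)^i$ is the standard computation, and your use of the functional equation is sound: from $a_{e-k}=\pm\,p^{(m+1)(e-2k)/2}a_k$ one gets $\ord_p(a_{e-k})\ge k\nu_m+(m+1)(e-2k)/2\ge (e-k)\nu_m$ whenever $\ord_p(a_k)\ge k\nu_m$ and $k\le e/2$, so the verification does reduce to the bottom half of the coefficients.

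The concern is that everything after that --- the inductive step via the reduction formula of \cite{Katsurada99} --- is precisely where the whole theorem lives, and your proposal defers it as ``bookkeeping.'' It is not routine bookkeeping: the recursion in \cite{Katsurada99} does not uniformly split off a rank-two block (over $\ZZ_p$ with $p$ odd every Jordan block has rank one, while at $p=2$ there are genuinely indecomposable rank-two blocks, and the shape of the recursion and of the prefactors differs between these cases), the prefactors themselves depend on $\xi_p$ of the complementary block and on valuations of the split-off piece, and one must track how the substitution $X\mapsto p^{?}X$ inside the recursion interacts with $\nu_m-\nu_{m-1}\in\{0,1\}$ depending on parity. So as written the proposal establishes the easy equivalence and the endpoints but not the theorem; you should either carry out that induction in full (in effect reproving Kitaoka's Theorem~2, which is a legitimate but substantial alternative to the paper's citation) or, as the paper does, simply invoke \cite{Kitaoka84}, Theorem~2.
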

\begin{proof}
The assertion follows from \cite{Kitaoka84}, Theorem 2.
\end{proof}
                             
For  $B \in \calh_m(\ZZ)_{>0}$ with $m$ even, 
 let ${\mathfrak d}_B$ be the discriminant of  ${\QQ}(\sqrt{(-1)^{m/2}\det B})/{\QQ}$,  and $\chi_B=({\frac{{\mathfrak d}_B}{*}})$  the Kronecker character corresponding to ${\QQ}(\sqrt{(-1)^{m/2}\det B})/{\QQ}$. We note that we have $\chi_B(p)=\xi_p(B)$ for any prime $p.$                                    
 We also note that 
\[(-1)^{m/2}\det (2B)=\frkd_B \frkf_B^2\]
with $\frkf_B \in \ZZ_{>0}$.
We define a polynomial $F_p^*(T,X)$ for any $T \in \calh_n(\ZZ_p)$ which is not-necessarily non-degenerate as follows:
For an element $T \in \calh_n(\ZZ_p)$ of rank $r \ge 1,$ there exists an element 
$\widetilde T \in \calh_r(\ZZ_p)^{{\rm nd}}$ such that $T \sim_{\ZZ_p} \widetilde T \bot O_{n-r}.$ We note that  
$F_p(\widetilde T,X)$ does not depend on the choice of $\widetilde T.$ Then we put $F_p^\ast(T,X)=F_p(\widetilde T,X).$  
For an element $T \in \calh_n(\ZZ)_{\ge 0}$ of rank $r \ge 1,$ there exists an element $\widetilde T \in 
\calh_r(\ZZ)_{>0}$ such that $T \sim_{\ZZ} \widetilde T \bot O_{n-r}.$
Then $\chi_{\widetilde T}$ does not depend on the choice of $\widetilde T$. We write $\chi_T^{\ast}=\chi_{\widetilde T}$ if $r$ is even. 
 For a non-negative integer $m$ and a primitive character $\phi$ let $B_{m,\phi}$ be the $m$-th generalized Bernoulli number for $\phi$. In the case $\phi$ is the principal character, we write $B_m=B_{m,\phi}$, which is the $m$-th Bernoulli number.  For a Dirichlet character $\phi$ we denote by $\phi_0$ the primitive character associated with $\phi$.
\begin{proposition} \label{prop.FC-Siegel-Eisenstein}
Let $n$ and $l$ be positive integers such that $l \ge n+1$, and $\phi$ a primitive character $\text{ mod } N$. 
 Then $E_{2n,l}^*(Z;N,\phi)$ is holomorphic and belongs to $M_l(\varGamma_0^{(2n)}(N),\phi)$ except the following case:

$l=n+1 \equiv 2 \text{ mod } 4$ and $\phi^2={\bf 1}_N$.

In the case that $E_{2n,l}^*(Z;N,\phi)$ is holomorphic we have the following assertion:
\begin{itemize}
\item[(1)] Suppose that  $N=1$ and $\phi$ is the principal character ${\bf 1}$, 
Then 
for $B \in \calh_{2n}(\ZZ)_{\ge 0}$ of rank $m,$ we have
\begin{align*}
c_{2n,l}(B,1,{\bf 1})
&=(-1)^{l/2+n(n+1)/2}2^{l-1+[(m+1)/2]}\prod_{p \mid  \det (2\widetilde B)} F_p^\ast(B,p^{l-m-1})\\
&\times \left\{
\begin{cases}
\prod_{i=m/2+1}^{n} \zeta(1+2i-2l) L(1+m/2-l,\chi_B^\ast)
 & \ {\rm if} \ m \ {\rm is \ even  }, \\
\prod_{i=(m+1)/2}^{n} \zeta(1+2i-2l) & \ {\rm if} \ m \ {\rm is \ odd}.
\end{cases}
\right.
\nonumber
\end{align*}
Here we make the convention that $F_p^*(B,p^{l-m-1})=1$ and $\L(1+m/2-l,\chi^*_B)=\zeta(1-l)$ if $m=0$.
\item[(2)] Suppose that $N>1$. Then, $c_{2n,l}(B,N,\phi)=0$ if $B \in \calh_{2n}(\ZZ)_{\ge 0}$ is not positive definite.
Moreover, for any $B \in \calh_{2n}(\ZZ)_{>0}$ we have
\begin{align*}
&c_{2n,l}(B,N,\phi)=(-1)^{nl+(l-n-\delta_{(\phi \chi_B)_0})/2}2^{n+l-1}\sqrt{-1}^{-\delta_{(\phi \chi_B)_0}}|\frkd_B|^{l-n-1/2}\\
&\times m_{(\phi\chi_B)_0}^{n-l}\tau((\phi\chi_B)_0)  \prod_p F_p(B,p^{l-2n-1}\bar \phi(p))
 L(1-l+n,\overline{(\phi \chi_B)_0})\\
& \times \prod_{p | N|\frkd_B|} (1-p^{n-l}(\phi\chi_B)_0).
\end{align*}
\end{itemize}
\end{proposition}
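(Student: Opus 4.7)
The plan is to compute the Fourier coefficients of $E_{2n,l}^*(Z;N,\phi,s)$ directly from the Bruhat decomposition, specialize at $s=0$, and rewrite each local factor using the Siegel series polynomial $F_p$. This extends the level-one calculation carried out in [\cite{Katsurada15}, Appendix] to general level $N$ with character $\phi$. For holomorphy, I would follow Shimura's analysis: the normalizing factor $\call_{2n}(l+2s,\phi)$ cancels all prospective poles at $s=0$ of the individual Fourier modes, and the only remaining candidate is an archimedean pole from $\pi^{n/2-s}\Gamma(s-n/2)$ combined with $L(2s+l-2i,\phi^2)$, which survives precisely in the excluded case $l=n+1 \equiv 2 \pmod 4$ with $\phi^2 = \mathbf{1}_N$.

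For assertion (1), the Fourier coefficient of $E_{2n,l}^*(Z;1,\mathbf{1})$ at non-degenerate $B$ is an $(n,l)$-dependent constant times $\prod_p b_p(B,l)$. Invoking $b_p(B,s) = \gamma_p(B,p^{-s})F_p(B,p^{-s})$ and collecting the $\gamma_p$-factors globally into products of the form $\prod_i (1-p^{2i-2l})$ turns these into the $\zeta$- and $L$-values of $\call_{2n}(l,\mathbf{1})$. For rank-deficient $B$ the block decomposition $B \sim_{\ZZ} \widetilde B \bot O_{2n-r}$ together with $F_p^*(B,X)=F_p(\widetilde B,X)$ reduces the computation to the non-degenerate rank-$r$ case, and the $O_{2n-r}$ block contributes the extra zeta factors $\prod_{i>r/2}^n \zeta(1+2i-2l)$ via the Gamma and $L$-factors not absorbed into the smaller Siegel series. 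For (2), the $\phi$-twisted sum over $T^{(n)}(N)_\infty \backslash T^{(n)}(N)$ combined with non-triviality of $\phi$ kills non-positive-definite Fourier modes, and for $B>0$ the local computation produces $F_p(B,p^{l-2n-1}\bar\phi(p))$ at unramified primes, the correction factor $(1-p^{n-l}(\phi\chi_B)_0(p))$ at primes $p \mid N|\frkd_B|$, and at the archimedean place the discriminant factor $|\frkd_B|^{l-n-1/2}$. The Gauss sum $\tau((\phi\chi_B)_0)$, conductor $m_{(\phi\chi_B)_0}$, and $L$-value $L(1-l+n,\overline{(\phi\chi_B)_0})$ all emerge from the functional equation of $L(s,(\phi\chi_B)_0)$ together with the identity $(-1)^n\det(2B)=\frkd_B\frkf_B^2$.

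The main obstacle is the careful bookkeeping of normalizing constants: the overall sign $(-1)^{nl+(l-n-\delta_{(\phi\chi_B)_0})/2}$, the powers of $2$ and $\sqrt{-1}$, and the precise placement of Gauss sums and conductor powers must be extracted from a meticulous derivation of the archimedean Whittaker integral together with the twisted functional equation. Since the author explicitly notes that this calculation corrects inaccuracies in [\cite{Katsurada15}, Appendix], much of the technical effort lies precisely in verifying that all these factors combine exactly as stated, especially in the boundary case $l = n+1$ where some $\zeta$-values in the level-one formula are to be interpreted via the $B_{m,\phi}$-formalism.
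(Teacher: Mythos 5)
Your plan is mathematically on the right track, but it takes a much heavier route than the paper, and one step of your derivation of (2) is misstated in a way that matters. The paper does not recompute the Fourier expansion from the Bruhat decomposition at all: for (1) it simply quotes the exact level-one formula from [Katsurada10, Theorem 2.3] and evaluates the normalizing factor $\call_{2n}(l,\chi)=\zeta(1-l)\prod_{i=1}^n\zeta(1-2l+2i)(-1)^{(n(n+1)+l)/2}2^{l-1}$; for (2) it quotes the vanishing of non-positive-definite coefficients and the formula
\[
c_{2n,l}(B,N,\phi)=(-1)^{nl}2^{2n}\Gamma(l-n)\,(\det(2B))^{l-n-1/2}\prod_p F_p(B,p^{-l}\phi(p))\,\frac{L(l-n,\phi\chi_B)}{\pi^{l-n}}
\]
from [B\"ocherer--Schmidt, Section 5], and then performs three algebraic manipulations: factoring $L(l-n,\phi\chi_B)$ into the primitive $L$-value times the Euler factors at $p\mid N|\frkd_B|$, applying the functional equation of the Dirichlet $L$-function to produce $\tau((\phi\chi_B)_0)$, $m_{(\phi\chi_B)_0}^{n-l}$ and $L(1-l+n,\overline{(\phi\chi_B)_0})$, and---crucially---applying the functional equation of the local Siegel series polynomial from [Katsurada99], which gives $\frkf_B^{2l-2n-1}\prod_p F_p(B,p^{-l}\phi(p))=\prod_p F_p(B,p^{l-2n-1}\bar\phi(p))$ and, combined with $\det(2B)=|\frkd_B|\frkf_B^2$, converts $(\det(2B))^{l-n-1/2}$ into $|\frkd_B|^{l-n-1/2}$. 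Your assertion that ``the local computation produces $F_p(B,p^{l-2n-1}\bar\phi(p))$ at unramified primes'' is not correct as stated: the direct local computation yields $F_p(B,p^{-l}\phi(p))$, and the passage to the argument $p^{l-2n-1}\bar\phi(p)$ together with the replacement of $\det(2B)$ by $|\frkd_B|$ is exactly this global functional-equation step, which your write-up omits. If you carry out the Bruhat-decomposition computation you will still need this ingredient to land on the formula in the exact form stated, so you should either cite the functional equation of $F_p(B,X)$ explicitly or, more economically, follow the paper and start from the already-established coefficient formulas and only do the three manipulations above.
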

\begin{proof}  (1) The assertion  follows from [\cite{Katsurada10}, Theorem 2.3] remarking that
\[\call_{2n}(l,\chi)=\zeta(1-l)\prod_{i=1}^n \zeta(1-2l+2i)(-1)^{(n(n+1)+l)/2}2^{l-1}.\]
(2) The first assertion follows from [\cite{Boecherer-Schmidt00}, Section 5]. Let $B \in \calh_{2n}(\ZZ)_{> 0}$. Then, 
\begin{align*}
&c_{2n,l}(B,N,\phi)=(-1)^{nl}2^{2n}\Gamma(l-n)\\
&\times (\det (2B))^{l-n-1/2}\prod_p F_p(B,p^{-l}\phi(p)){L(l-n,\phi\chi_B) \over \pi^{l-n}}.
\end{align*}
We have 
\[L(l-n,\phi\chi_B)=L(l-n,(\phi\chi_B)_0)\prod_{p | N|\frkd_B|} (1-p^{n-l}(\phi\chi_B)_0),\]
and
\begin{align*}
&{\Gamma(l-n)L(l-n,(\phi\chi_B)_0) \over \pi^{l-n}}=(-1)^{(l-n-\delta_{(\phi\chi_B})_0)/2}2^{l-n-1} m_{(\phi\chi_B)_0}^{n-l}\sqrt{-1}^{-\delta_{(\phi\chi_B)_0}}\\
&\times L(1-l+n,\overline{(\phi\chi_B)_0}).
\end{align*}
Moreover, by the functional equation of $F_p(B,X)$ (cf. \cite{Katsurada99}), we have
\[\frkf_B^{2l-2n-1}\prod_p F_p(B,p^{-l}\phi(p))=\prod_p F_p(p^{l-2n-1}\bar \phi(p),B).\]
Thus the assertion is proved remarking that $\det (2B)=|\frkd_B|\frkf_B^2$.
\end{proof}

\begin{corollary} \label{cor.FC-Siegel-Eisenstein}
Let the notation be as above. 
\begin{itemize}
\item [(1)] Suppose that $N=1$. 
Then, $c_{2n,l}(B,1,{\bf 1})$ belongs to \\ $\langle (\prod_{i=1}^n (2l-2i)(2l-2i+1)!)^{-1}\rangle_{\ZZ}$
 for any $B \in \calh_{2n}(\ZZ)_{\ge 0}$. 
\item [(2)] Suppose that $N>1$. Then for $B \in \calh_{2n}(\ZZ) {\ge 0}$, $c_{2n,l}(B,N,\phi)$ is an algebraic number. 
In particular if  $\mathrm{GCD}(\det (2B),N)=1$, then
$\tau(\phi)^{-1}\sqrt{-1}^{-l}c_{2n,l}(B,N,\phi)$ belongs to $\langle (l-n)^{-1}\rangle_{\frkO_{\QQ(\phi)}[N^{-1}]}$.
\end{itemize}
 \end{corollary}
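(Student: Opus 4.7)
The plan is to deduce both parts directly from the explicit formulas in Proposition \ref{prop.FC-Siegel-Eisenstein} by bounding the denominators of the individual factors.

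For (1), write $c_{2n,l}(B,1,\mathbf{1})$ via Proposition \ref{prop.FC-Siegel-Eisenstein}(1). The prefactor $2^{l-1+[(m+1)/2]}$ is a positive integer, and the finite product $\prod_p F_p^*(B,p^{l-m-1})$ is an integer because $F_p(T,X)\in\ZZ[X]$. It then suffices to bound the denominator of the product of critical values of Riemann zeta and (if $m$ is even) of a Dirichlet $L$-function. Using $\zeta(1-2k)=-B_{2k}/(2k)$ together with the Clausen--von Staudt theorem, the denominator of $\zeta(1+2i-2l)$ divides $(2l-2i)\cdot\!\!\!\prod_{(p-1)\mid(2l-2i)}\!\!\! p$, which in turn divides $(2l-2i)(2l-2i+1)!$. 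The analogous bound for $L(1+m/2-l,\chi_B^*)=-B_{l-m/2,\chi_B^*}/(l-m/2)$ follows from classical integrality estimates for generalized Bernoulli numbers and fits into the slot $i=l-m/2$ of the product $\prod_{i=1}^n (2l-2i)(2l-2i+1)!$.

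For (2), algebraicity of $c_{2n,l}(B,N,\phi)$ is immediate from Proposition \ref{prop.FC-Siegel-Eisenstein}(2) once one observes that $\tau(\chi_B^*)^2=\chi_B^*(-1)|\frkd_B|$ kills the apparent square root in $|\frkd_B|^{l-n-1/2}$. For the refined bound under $\gcd(\det(2B),N)=1$, I would process the formula factor by factor. Since the conductors $N$ and $m_{\chi_B^*}\mid|\frkd_B|$ are coprime, $(\phi\chi_B)_0=\phi\cdot\chi_B^*$, and the standard factorization
\[
\tau((\phi\chi_B)_0)=\phi(m_{\chi_B^*})\chi_B^*(N)\,\tau(\phi)\tau(\chi_B^*)
\]
combined with $|\frkd_B|^{l-n-1/2}$ yields a positive integer power of $|\frkd_B|$ (a unit in $\frkO_{\QQ(\phi)}[N^{-1}]$) times $\tau(\phi)$ times a fourth root of unity, which cancels with $\sqrt{-1}^{-l}$. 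Each $F_p(B,p^{l-2n-1}\bar\phi(p))$ belongs to $\frkO_{\QQ(\phi)}$. Under $\gcd(\det(2B),N)=1$ the primitive character $(\phi\chi_B)_0$ vanishes at every $p\mid N|\frkd_B|$, so the Euler-like product $\prod_{p\mid N|\frkd_B|}(1-p^{n-l}(\phi\chi_B)_0(p))$ collapses to $1$. Finally $L(1-(l-n),\overline{(\phi\chi_B)_0})=-B_{l-n,\overline{(\phi\chi_B)_0}}/(l-n)$ is $\frkO_{\QQ(\phi)}[N^{-1}]$-integral up to the $(l-n)$ in the denominator, by standard integrality for generalized Bernoulli numbers applied to a character whose conductor is supported only on primes dividing $N|\frkd_B|$.

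The main obstacle is the bookkeeping in (2): simultaneously keeping track of the half-integer power $|\frkd_B|^{l-n-1/2}$, the Gauss sum $\tau((\phi\chi_B)_0)$, the various $\sqrt{-1}$ signs, and the contributions of primes dividing $|\frkd_B|$, and verifying that each combines with $\tau(\phi)^{-1}\sqrt{-1}^{-l}$ to produce a unit in $\frkO_{\QQ(\phi)}[N^{-1}]$. Part (1) is comparatively mechanical once the classical denominator estimates for Bernoulli and generalized Bernoulli numbers are in hand.
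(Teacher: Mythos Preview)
Your approach matches the paper's proof closely. The one genuine gap is the justification in (1) that $\prod_p F_p^*(B,p^{l-m-1})$ is an integer ``because $F_p(T,X)\in\ZZ[X]$'': the exponent $l-m-1$ can be negative (for instance $m=2n$, $l=n+1$ gives $l-m-1=-n$), so knowing $F_p\in\ZZ[X]$ alone is not enough. The paper invokes Proposition~\ref{prop.precise-Kitaoka-polynomial} (Kitaoka's result), which gives $F_p(B,X)=H_p(B,p^{[(m+1)/2]}X)$ for some $H_p\in\ZZ[X]$; since $[(m+1)/2]+l-m-1\ge 0$ in the relevant range, integrality follows. The same proposition is needed in (2) to justify $F_p(B,p^{l-2n-1}\bar\phi(p))\in\frkO_{\QQ(\phi)}$, which you asserted without a reason.

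A few bookkeeping slips do not affect the outcome but should be fixed. In (1), the $L$-value occupies slot $i=m/2$ of the product (giving the bound $(2l-m)(2l-m+1)!$, as the paper obtains via \cite{Boecherer84}), not slot $i=l-m/2$. In (2), you omitted the factor $m_{(\phi\chi_B)_0}^{n-l}=(N|\frkd_B|)^{n-l}$ from the formula; once it is included, the powers of $|\frkd_B|$ coming from $|\frkd_B|^{l-n-1/2}$, $|\frkd_B|^{n-l}$, and $\tau(\chi_B)=|\frkd_B|^{1/2}\sqrt{-1}^{\delta_{\chi_B}}$ cancel exactly to $|\frkd_B|^0=1$ rather than leaving a positive power. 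Finally, $|\frkd_B|$ being coprime to $N$ does not make it a unit in $\frkO_{\QQ(\phi)}[N^{-1}]$; fortunately the net exponent is nonnegative, so this misstatement is harmless.
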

\begin{proof}
(1) By  Proposition \ref{prop.precise-Kitaoka-polynomial}, the product $\prod_{p | \det (2\widetilde B)} F_p^\ast(B,l-m-1)$  is an integer for any $m$ and $B \in \calh_n(\ZZ)_{\ge 0}$ with rank $m$. 
By Clausen-von-Staudt theorem, $\zeta(1-2l+2i)$ belongs to $\langle( (2l-2i)(2l-2i+1)!)^{-1} \rangle_{\ZZ}$.
By [\cite{Boecherer84}, (5.1), (5.2)] and Clausen-von-Staudt theorem, for any positive even integer $m$ and $\widetilde B \in \calh_m(\ZZ)_{>0}$, 
$L(1-l+m/2,\chi_{\widetilde B})$ belongs to $\langle ((2l-m)(2l-m+1)!)^{-1}\rangle_{\ZZ}$.
This proves the assertion.\\
(2) It is well known that $L(1-l+n,\overline{(\phi \chi_B)_0})$ is algebraic. This proves the first part of the assertion. 
Suppose that $\det (2B)$ is coprime to $N$. Then $\phi \chi_B$ is a primitive character of conductor $N|\frkd_B|$ and
\begin{align*}
\tau(\phi \chi_B)&=\phi(|\frkd_B|)\chi_B(N)\tau(\phi)\tau(\chi_B)\\
&=\phi(|\frkd_B|)\chi_B(N)\tau(\phi)|\frkd_B|^{1/2}\sqrt{-1}^{\delta_{\chi_B}}.
\end{align*}
By \cite{Carlitz59} or \cite{Leopoldt58},  $N(l-n)L(1-l+n,\overline{\phi \chi_B})$ belongs to $\frkO_{\QQ(\phi)}$,
and  by Proposition \ref{prop.precise-Kitaoka-polynomial}, $\prod_p F_p(p^{l-2n-1} \bar \phi(p),B)$ is an element of 
$\frkO_{\QQ(\phi)}$. Thus the assertion has been proved remarking that $\sqrt{-1}^l=\pm \sqrt{-1}^{\delta_{\chi_B}-\delta_{\phi\chi_B}}$.
\end{proof}

Let $\stackrel {\!\!\!\!\!\! \circ} {{\mathcal D}_{n,l}^{\nu}}$ be the differential operator in \cite{Boecherer-Schmidt00}, which maps 
$M_{l}^{\infty}(\varGamma^{(2n)}_0(N))$ to $M_{l+\nu}^{\infty}(\varGamma^{(n)}_0(N)) \otimes M_{l+\nu}^{\infty}(\varGamma^{(n)}_0(N)).$  Let $\chi$ be a primitive character mod $N.$ For a non-negative integer $\nu \le k$, we define a function 
${\mathfrak  E}_{2n}^{k,\nu}(Z_1,Z_2,N,\chi)$ on ${\bf H}_n \times {\bf H}_n$ as
\begin{align*}
&{\mathfrak  E}_{2n}^{k,\nu}(Z_1,Z_2,N,\chi) = (2\pi \sqrt{-1})^{-\nu}\tau(\chi)^{-n-1}\sqrt{-1}^{-k+\nu}\\
& \times \stackrel {\!\!\!\!\!\! \circ} {{\mathcal D}_{n,k-\nu}^{\nu}}\left(\sum_{X \in M_n(\ZZ)/NM_n(\ZZ)} \chi(\det X)E^*_{2n,k-\nu}(*,N,\chi)|_{k-\nu}
\left(\begin{smallmatrix} 1_{2n} & S(X/N)\\O&1_{2n}\end{smallmatrix}\right)\right)(Z_1,Z_2)
\end{align*}
for $(Z_1,Z_2) \in {\bf H}_n \times {\bf H}_n,$ where $S(X/N)=\mattwo(O_n;X/N;{}^tX/N;O_n).$ 
Let $X$ be a symmetric matrix of size $2n$ of variables.
Then there exists a polynomial $P_{n,l}^{\nu}(X)$ in $X$ such that
\begin{align*} &\stackrel {\!\!\!\!\!\! \circ} {{\mathcal D}_{n,l}^{\nu}}\Bigl({\bf e}\bigl(\mathrm{tr}\Bigl(\begin{pmatrix} A_1 & R/2 \\ {}^tR/2 & A_2 \end{pmatrix} \begin{pmatrix} Z_1 & Z_{12} \\ {}^t Z_{12} & Z_2 \end{pmatrix}\Bigr)\Bigr)\Bigr)\\
&=(2\pi \sqrt{-1})^{\nu}P_{n,l}^{\nu}\Bigl(\begin{pmatrix} A_1 & R/2 \\ {}^tR/2 & A_2 \end{pmatrix}\Bigr) 
{\bf e}(\mathrm{tr}(A_1Z_1+A_2Z_2)) 
\end{align*}
for $\begin{pmatrix} A_1 & R/2 \\ {}^tR/2 & A_2 \end{pmatrix} \in \calh_{2n}(\ZZ)_{\ge 0}$ with $A_1,A_2 \in \calh_n(\ZZ)_{\ge 0}$
and $ \begin{pmatrix} Z_1 & Z_{12} \\ {}^t Z_{12} & Z_2 \end{pmatrix} \in {\bf H}_{2n}$ with $Z_1,Z_2 \in {\bf H}_n$.
\begin{proposition} \label{prop.FC-Siegel-Eisenstein2}
Under the above notation and the assumption, for a non-negative integer $l \le k$ write
${\mathfrak  E}_{2n}^{k,k-l}(Z_1,Z_2,N,\chi)$
as
\begin{align*}
{\mathfrak  E}_{2n}^{k,k-l}(Z_1,Z_2,N,\chi) =\sum_{A_1,A_2 \in \calh_n(\ZZ)_{\ge 0}} c_{{\mathfrak  E}_{2n}^{k,k-l}(Z_1,Z_2,N,\chi)}(A_1,A_2){\bf e}(\mathrm{tr}(A_1Z_1+A_2Z_2)
\end{align*}
Then we have
\begin{align*}
&c_{{\mathfrak  E}_{2n}^{k,k-l}(Z_1,Z_2,N,\chi)}(A_1,A_2)\\
&=
\sum_{R \in M_n(\ZZ)} P_{n,l}^{k-l}\Bigl(\begin{pmatrix} A_1 & R/2 \\ {}^tR/2 & A_2 \end{pmatrix}\Bigr)c_{2n,l}\Bigl(\begin{pmatrix} A_1 & R/2 \\ {}^tR/2 & A_2 \end{pmatrix}\Bigr)\bar \chi(\det R)\tau(\chi)^{-1}\sqrt{-1}^{-l}
\end{align*}
\end{proposition}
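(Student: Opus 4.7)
The proposal is to compute the Fourier expansion step by step, starting from the Siegel Eisenstein series and tracking how each operator in the definition of $\mathfrak{E}_{2n}^{k,k-l}$ transforms the Fourier coefficients.

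First, I would write out the Fourier expansion
\[
E^*_{2n,l}(Z;N,\chi) = \sum_{B \in \calh_{2n}(\ZZ)_{\ge 0}} c_{2n,l}(B,N,\chi)\,\mathbf{e}(\mathrm{tr}(BZ)),
\]
and apply the slash action of the upper-triangular translation $\left(\begin{smallmatrix} 1_{2n} & S(X/N) \\ O & 1_{2n}\end{smallmatrix}\right)$: since $j$ is trivial here and the action is $Z \mapsto Z + S(X/N)$, each Fourier term is multiplied by $\mathbf{e}(\mathrm{tr}(B\,S(X/N)))$. Writing $B = \left(\begin{smallmatrix} A_1 & R/2 \\ {}^tR/2 & A_2\end{smallmatrix}\right)$ with $A_1, A_2 \in \calh_n(\ZZ)_{\ge 0}$ and $R \in M_n(\ZZ)$, a direct block computation gives $\mathrm{tr}(B\,S(X/N)) = \mathrm{tr}(R\,{}^tX)/N$.

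Next, I would evaluate the sum over $X \in M_n(\ZZ)/NM_n(\ZZ)$ using the matrix Gauss sum identity
\[
\sum_{X \in M_n(\ZZ)/NM_n(\ZZ)} \chi(\det X)\,\mathbf{e}(\mathrm{tr}(R\,{}^tX)/N) \;=\; \bar\chi(\det R)\,\tau(\chi)^n,
\]
valid because $\chi$ is primitive of conductor $N$; this reduces a full block sum via the usual Bruhat-type decomposition to the one-dimensional Gauss sum raised to the $n$-th power. Only terms with $R$ coprime in determinant to $N$ survive, which accounts for the $\bar\chi(\det R)$ factor in the stated formula.

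Then I would apply $\stackrel{\!\!\!\!\!\!\circ}{\mathcal{D}_{n,l}^{k-l}}$ term by term, legitimate because the Fourier series converges absolutely and the operator is a finite-order holomorphic differential operator in the off-diagonal variables. By the very defining property of the polynomial $P_{n,l}^{k-l}$ recalled just before the statement, the operator sends $\mathbf{e}(\mathrm{tr}(BZ))$ (with $Z$ restricted diagonally to $\mathrm{diag}(Z_1,Z_2)$) to $(2\pi\sqrt{-1})^{k-l} P_{n,l}^{k-l}(B)\,\mathbf{e}(\mathrm{tr}(A_1 Z_1 + A_2 Z_2))$. Multiplying out the three normalizing scalars $(2\pi\sqrt{-1})^{-(k-l)}$, $\tau(\chi)^{-n-1}$, $\sqrt{-1}^{-k+(k-l)} = \sqrt{-1}^{-l}$ against the factors $(2\pi\sqrt{-1})^{k-l}$ from the differential operator and $\tau(\chi)^n$ from the Gauss sum collapses everything to the claimed coefficient $\tau(\chi)^{-1}\sqrt{-1}^{-l}\,\bar\chi(\det R)\,P_{n,l}^{k-l}(B)\,c_{2n,l}(B,N,\chi)$, summed over $R \in M_n(\ZZ)$.

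The main obstacle is the matrix Gauss sum identity and, relatedly, justifying the extension of the sum over a set of representatives $X \in M_n(\ZZ)/NM_n(\ZZ)$ to a full sum over $R \in M_n(\ZZ)$: one has to verify that for each fixed $R$ the contribution of $X$ is periodic modulo $N$, that nonprimitive $R$ contribute zero unless $\chi(\det R) = 0$ is consistent with $c_{2n,l}$ vanishing there, and that the interchange of summations is absolutely convergent on compact subsets of $\mathbf{H}_n \times \mathbf{H}_n$. Once these points are checked, the rest is bookkeeping of the scalar factors.
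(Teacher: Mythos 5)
The paper offers no proof of this proposition at all---it is stated as a routine unfolding of the definition---so there is nothing to compare against except the computation itself; your route (Fourier-expand $E^*_{2n,l}$, act by the translation, evaluate the character sum over $X$, then apply $\stackrel{\!\!\!\!\!\!\circ}{{\mathcal D}_{n,l}^{k-l}}$ term by term via the defining property of $P_{n,l}^{k-l}$) is exactly the intended one. The block identity $\mathrm{tr}(B\,S(X/N))=\mathrm{tr}(R\,{}^tX)/N$, the cancellation of $(2\pi\sqrt{-1})^{\pm(k-l)}$, and $\sqrt{-1}^{-k+(k-l)}=\sqrt{-1}^{-l}$ are all correct, and you rightly isolate the two points that genuinely need an argument: the evaluation of the matrix character sum and its vanishing when $\gcd(\det R,N)>1$ (both hold for primitive $\chi$ and are standard; see Andrianov or B\"ocherer--Schmidt). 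Note also that for fixed $A_1,A_2$ only finitely many $R$ give a semi-positive block matrix, so the convergence worries in your last paragraph are easily dispatched by setting $c_{2n,l}(B)=0$ off $\calh_{2n}(\ZZ)_{\ge 0}$.

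The one concrete error is the value you assign to the Gauss sum. For $\chi$ primitive mod $N$ and $\det R$ prime to $N$, the substitution ${}^tX\mapsto R^{-1}Y$ does give $\sum_X\chi(\det X)\,{\bf e}(\mathrm{tr}(R\,{}^tX)/N)=\bar\chi(\det R)\sum_Y\chi(\det Y)\,{\bf e}(\mathrm{tr}(Y)/N)$, but the latter sum equals $N^{n(n-1)/2}\tau(\chi)^n$, not $\tau(\chi)^n$. (Check $n=2$, $N=p$ prime: summing $\chi(ad-bc)\,{\bf e}((a+d)/p)$ over $b,c$ first yields $p\,\chi(ad)$, so the total is $p\,\tau(\chi)^2$.) Your computation therefore actually produces the displayed right-hand side multiplied by $N^{n(n-1)/2}$. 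The proposition as printed also lacks this factor, so your bookkeeping ``confirms'' the printed formula only because of this compensating slip; the discrepancy is a unit in $\frkO_{\QQ(\chi)}[N^{-1}]$ and hence harmless for the subsequent integrality statements (Corollary \ref{cor.FC-Siegel-Eisenstein2}, Proposition \ref{prop.integrality-of-partial-Eisen} and Theorem \ref{th.main-result}), but you should state the identity correctly and say explicitly where the power of $N$ is absorbed.
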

\begin{corollary} \label{cor.FC-Siegel-Eisenstein2}
For any $A_1,A_2 \in \calh_n(\ZZ)_{>0}$, 
$c_{{\mathfrak  E}_{2n}^{k,k-l}(Z_1,Z_2,N,\chi)}(A_1,A_2)$ belongs to $\bar \QQ$, and in particular if $\det \Bigl(\begin{pmatrix} 2A_1 & R \\ {}^tR & 2A_2 \end{pmatrix}\Bigr)$ is prime to $N$, then $a_{n,l}c_{{\mathfrak  E}_{2n}^{k,k-l}(Z_1,Z_2,N,\chi)}(A_1,A_2)$ belongs $\frkO_{\QQ(\chi)}[N^{-1}]$,
where $a_{n,l}=\prod_{i=1}^n (2l-2i)(2l-2i+1)!$.
\end{corollary}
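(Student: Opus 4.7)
The plan is to combine the explicit formula of Proposition~\ref{prop.FC-Siegel-Eisenstein2} with the denominator bounds of Corollary~\ref{cor.FC-Siegel-Eisenstein}. Writing $B_R=\begin{pmatrix} A_1 & R/2 \\ {}^tR/2 & A_2\end{pmatrix}$, Proposition~\ref{prop.FC-Siegel-Eisenstein2} gives
\[c_{\mathfrak{E}_{2n}^{k,k-l}(Z_1,Z_2,N,\chi)}(A_1,A_2)=\sum_{R\in M_n(\ZZ)} P_{n,l}^{k-l}(B_R)\,c_{2n,l}(B_R,N,\chi)\,\bar\chi(\det R)\,\tau(\chi)^{-1}\sqrt{-1}^{-l}.\]
Since $A_1,A_2$ are positive definite and $c_{2n,l}(B_R,N,\chi)$ vanishes unless $B_R\in\calh_{2n}(\ZZ)_{\ge0}$ (by Proposition~\ref{prop.FC-Siegel-Eisenstein}), only finitely many $R$ contribute, so this is a finite sum. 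The algebraicity claim then follows term by term: $\bar\chi(\det R)$ is a root of unity or zero, Corollary~\ref{cor.FC-Siegel-Eisenstein} supplies $\tau(\chi)^{-1}\sqrt{-1}^{-l}c_{2n,l}(B_R,N,\chi)\in\bar\QQ$, and $P_{n,l}^{k-l}$ is a polynomial with rational coefficients, so $P_{n,l}^{k-l}(B_R)\in\QQ$ whenever $B_R$ is half-integral.

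For the integrality assertion, assume that $\det(2B_R)$ is coprime to $N$ for every $R$ with $B_R\in\calh_{2n}(\ZZ)_{\ge 0}$, which is the natural reading of the hypothesis since only such $R$ contribute. Corollary~\ref{cor.FC-Siegel-Eisenstein}(2) then yields
\[\tau(\chi)^{-1}\sqrt{-1}^{-l}\,c_{2n,l}(B_R,N,\chi)\in\langle (l-n)^{-1}\rangle_{\frkO_{\QQ(\chi)}[N^{-1}]}\]
for each contributing $R$, so the problem reduces to controlling the denominator of $P_{n,l}^{k-l}(B_R)$. From the explicit description of the B\"ocherer--Schmidt operator $\stackrel{\!\!\!\!\!\!\circ}{{\mathcal D}_{n,l}^{k-l}}$, one expands the operator into monomial differentiations whose coefficients are Pochhammer-type polynomials in $l$; evaluating on half-integral matrices shows that these denominators divide $\prod_{i=1}^n(2l-2i+1)!$, up to a bounded power of $2$. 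Combining this with the $(l-n)^{-1}$ in the displayed formula and noting that $2(l-n)=2l-2n$ is exactly the $i=n$ factor of $a_{n,l}=\prod_{i=1}^n (2l-2i)(2l-2i+1)!$, one concludes that $a_{n,l}\,c_{\mathfrak{E}_{2n}^{k,k-l}(Z_1,Z_2,N,\chi)}(A_1,A_2)$ lies in $\frkO_{\QQ(\chi)}[N^{-1}]$.

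The main obstacle is the denominator bound for $P_{n,l}^{k-l}$, and indeed the shape of $a_{n,l}$ is what dictates the argument: the linear factors $2l-2i$ for $i<n$ must absorb residual denominators of Pochhammer type $(l-(i-1)/2)_{k-l}$ arising from the Maass-type lowering that underlies $\stackrel{\!\!\!\!\!\!\circ}{{\mathcal D}_{n,l}^{k-l}}$, while the factorials $(2l-2i+1)!$ absorb denominators introduced when the operator is expanded in monomials in $\partial/\partial z_{ij}$. Carrying this bookkeeping through the explicit formulas of \cite{Boecherer-Schmidt00} is the one genuinely computational step; everything else is assembly of ingredients already in place.
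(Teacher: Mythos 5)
Your skeleton is the right one, and it is in fact the paper's (implicit) route: the corollary is stated without proof, as an immediate consequence of the finite-sum formula in Proposition \ref{prop.FC-Siegel-Eisenstein2} together with Corollary \ref{cor.FC-Siegel-Eisenstein}. Your observations that only finitely many $R$ contribute (positivity of $A_1,A_2$ plus the support condition on $c_{2n,l}$), that algebraicity follows term by term, and that the coprimality hypothesis should be read as applying to every contributing $R$ (the others being killed by $\bar\chi(\det R)=0$ in the application) are all correct.

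The gap is your treatment of $P_{n,l}^{k-l}(B_R)$, which is the only nontrivial point. You assert, without proof or citation, that its denominators on half-integral matrices divide $\prod_{i=1}^n(2l-2i+1)!$ "up to a bounded power of $2$". Even granting that claim, it does not yield the stated conclusion: $a_{n,l}=\prod_{i=1}^n(2l-2i)(2l-2i+1)!$ carries no spare power of $2$ (the factor $2^{\alpha(n,k)}$ enters only later, in the proof of Theorem \ref{th.main-result}), so an uncontrolled $2$-denominator already falls outside $\frkO_{\QQ(\chi)}[N^{-1}]$ when $N$ is odd. More seriously, your accounting of $a_{n,l}$ conflicts with the paper's: by Corollary \ref{cor.FC-Siegel-Eisenstein} (1), in the level-one case the entire factor $a_{n,l}$ is already consumed by the Eisenstein coefficients $c_{2n,l}(B_R,1,{\bf 1})$ themselves (it is exactly the Clausen--von Staudt denominator of the $\zeta$- and $L$-values appearing there), leaving nothing over to absorb denominators of $P_{n,l}^{k-l}$; and in the level-$N$ case only the single factor $l-n$ is needed for the coefficients, so attributing the factorials to the differential operator is neither necessary nor consistent. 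For the corollary to hold as stated one needs $P_{n,l}^{k-l}$ to take values in $\ZZ$ (or at worst in $\ZZ[1/2]$ with the discrepancy tracked explicitly) on $\calh_{2n}(\ZZ)_{\ge 0}$ --- this is precisely what the normalization of the B\"ocherer--Schmidt operator is designed to achieve --- and that integrality statement must be proved or cited; it cannot be replaced by the factorial bound you propose.
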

\bigskip
Suppose that $l \le k.$ Then ${\mathfrak  E}_{2n}^{k,k-l}(Z_1,Z_2,N,\chi)$ can be expressed as
$${\mathfrak  E}_{2n}^{k,k-l}(Z_1,Z_2,N,\chi)=\sum_{A \in {\mathcal L}_n(\ZZ)_{>0}}{\mathcal E}_{2n}^{k,k-l}(Z_1,A,N,\chi){\bf e}({\rm tr}(AZ_2))$$
with ${\mathcal E}_{2n}^{k,k-l}(Z_1,A,N,\chi)$ a  function of $Z_1$.  Put
 $${\mathcal G}_{2n}^{k,k-l}(Z_1,A,N,\chi)=\sum_{\gamma \in \varGamma_0^{(n)}(N^2) \backslash \varGamma^{(n)}}( {\mathcal E}_{2n}^{k,k-l})|_k\gamma(Z_1,A,N,\chi).$$
It is easily seen that ${\mathcal E}_{2n}^{k,k-l}(Z_1,A,N,\chi)$  belongs to $M_k(\varGamma_0^{(n)}(N^2))$, and therefore
 ${\mathcal G}_{2n}^{k,k-l}(Z_1,A,N,\chi)$ belongs to 
$M_k(\varGamma^{(n)})$. In particular, if $l<k$, then ${\mathcal G}_{2n}^{k,k-l}(Z_1,A,N,\chi)$ belongs to 
$S_k(\varGamma^{(n)})$.
\begin{proposition} \label{prop.integrality-of-partial-Eisen} 
Suppose that  $l \le k$ and let $A \in \calh_n(\ZZ)_{>0}$. 
Then  $a_{n,l}\calg_{2n}^{k,k-l}(Z_1,N^2A,N,\chi)$ belongs to $M_k(\varGamma^{(n)})({\mathcal O}_{\QQ(\chi,\zeta_N)}[N^{-1}]).$ In particular, if $l<k$, it belongs to $S_k(\varGamma^{(n)})({\mathcal O}_{\QQ(\chi,\zeta_N)}[N^{-1}]).$
\end{proposition}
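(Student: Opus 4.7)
My plan is to proceed in two stages: first prove $a_{n,l}\,\cale_{2n}^{k,k-l}(Z_1, N^2A, N, \chi) \in M_k(\varGamma_0^{(n)}(N^2))(\frkO_{\QQ(\chi)}[N^{-1}])$, and then pass to the trace at the cost of enlarging the coefficient ring to $\frkO_{\QQ(\chi, \zeta_N)}[N^{-1}]$. For the first stage I would apply Proposition \ref{prop.FC-Siegel-Eisenstein2}, which expresses $c_{{\mathfrak E}_{2n}^{k,k-l}(Z_1, Z_2, N, \chi)}(A_1, A_2)$ as a finite sum over $R \in M_n(\ZZ)$, each term weighted by $\bar\chi(\det R)$. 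Only $R$ with $\det R$ coprime to $N$ contribute, and for such $R$ together with $A_2 = N^2 A$ and any $A_1 \in \calh_n(\ZZ)_{\geq 0}$, the congruence
\[
\det\mattwo(2A_1;R;{}^tR;2N^2 A) \equiv (-1)^n (\det R)^2 \pmod N
\]
shows the block determinant is coprime to $N$, so Corollary \ref{cor.FC-Siegel-Eisenstein2}(2) applies termwise, yielding $a_{n,l}\,c_{{\mathfrak E}_{2n}^{k,k-l}}(A_1, N^2 A) \in \frkO_{\QQ(\chi)}[N^{-1}]$ (degenerate or indefinite block matrices contribute nothing by Proposition \ref{prop.FC-Siegel-Eisenstein}(2) and holomorphy).

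For the second stage, fix a system of representatives $\{\gamma\}$ for $\varGamma_0^{(n)}(N^2) \backslash \varGamma^{(n)}$. Each $\cale|_k \gamma$ has a Fourier expansion at infinity obtained from that of $\cale$ by a standard shift-and-scale transformation, whose coefficients lie a priori in $\frkO_{\QQ(\chi, \zeta_{N^2})}[N^{-1}]$ on account of the level $N^2$. Summing over $\{\gamma\}$ produces $\calg_{2n}^{k,k-l}(Z_1, N^2A, N, \chi) \in M_k(\varGamma^{(n)})$, whose unique Fourier expansion at infinity has coefficients in this same ring; cuspidality for $l<k$ was observed before the proposition.

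The central obstacle is then descending the coefficient field from $\QQ(\chi, \zeta_{N^2})$ to $\QQ(\chi, \zeta_N)$. My plan is to exploit the Galois equivariance afforded by the $\varGamma^{(n)}$-invariance of $\calg$: the action of $\Gal(\QQ(\chi, \zeta_{N^2})/\QQ(\chi, \zeta_N))$ permutes the coset representatives in a way compatible with the trace, so each coefficient of the canonical $\infty$-expansion of $\calg$ must be fixed by this group and therefore lies in $\QQ(\chi, \zeta_N)$. Alternatively, one can choose representatives of the explicit form $\mattwo(1_n;O;NC;1_n)\sigma$ with $\sigma$ in a finite set, as in \cite{Katsurada15}, Appendix, and carry out the integrality bookkeeping directly, bypassing $\zeta_{N^2}$ altogether. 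I expect this descent, together with the verification that the trace sum introduces no denominator beyond $a_{n,l}$, to be the most delicate point of the argument.
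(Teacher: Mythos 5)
Your first stage coincides with the paper's argument: the Fourier coefficient formula of Proposition \ref{prop.FC-Siegel-Eisenstein2}, the observation that $\det\left(\begin{smallmatrix} 2B & R \\ {}^tR & 2N^2A\end{smallmatrix}\right)\equiv(-1)^n(\det R)^2 \bmod N$ is prime to $N$ exactly when $\det R$ is, and Corollary \ref{cor.FC-Siegel-Eisenstein2} together give $a_{n,l}\cale_{2n}^{k,k-l}(Z_1,N^2A,N,\chi)\in M_k(\varGamma_0^{(n)}(N^2))(\frkO_{\QQ(\chi)}[N^{-1}])$. That part is correct and is what the paper does.

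The second stage has a genuine gap. For a general $\gamma\in\varGamma^{(n)}$ not in the Siegel parabolic there is no ``shift-and-scale'' formula expressing the Fourier expansion of $\cale|_k\gamma$ at infinity in terms of that of $\cale$; controlling the arithmetic of the expansions at the other cusps is precisely the hard point of the proposition, and your a priori containment of the coefficients of $\cale|_k\gamma$ in $\frkO_{\QQ(\chi,\zeta_{N^2})}[N^{-1}]$ is asserted rather than proved. The subsequent Galois descent likewise presupposes that $\Gal(\QQ(\chi,\zeta_{N^2})/\QQ(\chi,\zeta_N))$ acts on these $q$-expansions compatibly with the coset sum, which is itself a statement of arithmetic-geometric nature, not something that follows from the $\varGamma^{(n)}$-invariance of $\calg$ alone. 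The paper settles both points in one stroke by invoking the $q$-expansion principle for Siegel modular forms (\cite{Katz73}, \cite{Ichikawa14}): since $a_{n,l}\cale_{2n}^{k,k-l}$ has level $\varGamma_0^{(n)}(N^2)$ and coefficients in $\frkO_{\QQ(\chi)}[N^{-1}]$, every transform $a_{n,l}\cale_{2n}^{k,k-l}|_k\gamma$ lies in $M_k(\varGamma^{(n)}(N^2))(\frkO_{\QQ(\chi,\zeta_N)}[N^{-1}])$, and summing over the cosets and intersecting with $M_k(\varGamma^{(n)})$ finishes the proof. Your fallback of explicit coset representatives ``as in \cite{Katsurada15}, Appendix'' is also not safe: Remark \ref{rem.corrections} states that that appendix contains errors and that the present proposition is its corrected replacement. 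To close the gap you must cite (or reprove) the $q$-expansion principle; without it the passage from level $\varGamma_0^{(n)}(N^2)$ to full level does not go through.
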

\begin{proof}
We have
\begin{align*}
&c_{{\mathfrak  E}_{2n}^{k,k-l}(Z_1,Z_2,N,\chi)}(B,N^2A)\\
&=\sum_{R  \in M_n(\ZZ)} P_{n,l}^{k-l}\Bigl(\begin{pmatrix} B & R/2 \\ {}^tR/2 &  N^2A  \end{pmatrix}\Bigr)c_{2n,l}\Bigl(\begin{pmatrix} B & R/2 \\ {}^tR/2 & N^2A \end{pmatrix}\Bigr)\bar \chi(\det R)\tau(\chi)^{-1}\sqrt{-1}^{-l}.
\end{align*}
We note that  $\det \begin{pmatrix} 2B & R \\ {}^tR  & 2N^2A \end{pmatrix}$ is prime to $N$ if and only $\det R$ is prime to $N$. Therefore, by Corollary \ref{cor.FC-Siegel-Eisenstein},
 $a_{n,l}\cale_{2n}^{k,k-l}(Z_1,N^2A,N,\chi)$ belongs to $M_k(\varGamma_0^{(n)}(N^2)) (\frkO_{\QQ(\chi)}[N^{-1}])$.
By $q$-expansion principle (cf. \cite{Ichikawa14}, \cite{Katz73}),
 for any $\gamma \in \varGamma^{(n)}$,
$a_{n,l}\cale_{2n}^{k,k-l}|_k\gamma(Z_1,N^2A,N,\chi)$ belongs to 
$M_k(\varGamma^{(n)}(N^2)) (\frkO_{\QQ(\chi,\zeta_N)}[N^{-1}])$.
Hence, $a_{n,l}\calg_{2n}^{k,k-l}(Z_1,N^2A,N,\chi)$ belongs to
$M_k(\varGamma^{(n)}(N^2)) (\frkO_{\QQ(\chi,\zeta_N)}[N^{-1}]) \cap M_k(\varGamma^{(n)})=M_k(\varGamma^{(n)})(\frkO_{\QQ(\chi,\zeta_N)}[N^{-1}])$. This proves the first of the assertion. The latter is similar.
\end{proof}

\begin{theorem}
\label{th.pullback-Eisenstein}
Let $\{F_i \}_{i=1}^d$ be an orthogonal basis of $ S_k(\varGamma^{(n)})$ consisting of Hecke eigenforms, and $\{F_i \}_{d+1 \le i \le e}$ be a basis of the orthogonal complement $S_k(\varGamma^{(n)})^\perp$ of $S_k(\varGamma^{(n)})$ in $M_k(\varGamma^{(n)})$ with respect to the Petersson product.
Then we have
\begin{align*}
{\mathcal G}_{2n}^{k,k-l}(Z,N^2A,N,\chi) &=\sum_{i=1}^d c(n,l)N^{nl}\Lambda(l-n,F_i, \chi, {\rm St}) \overline{c_{F_i}(A)}F_i(Z)\\
&+ \sum_{i=d+1}^e c_i F_i(Z)
\end{align*}
where  $c(n,l)=(-1)^{a(n,l)}2^{b(n,l)}$ with $a(n,l),b(n,l)$ integers, and $c_i$ is a certain complex number. Moreover we have $c_i=0$ for any $d+1 \le i \le e$ if $l<k$.
\end{theorem}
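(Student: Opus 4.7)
The plan is to expand $\calg:=\calg_{2n}^{k,k-l}(\cdot,N^2A,N,\chi)$ in the given basis of $M_k(\varGamma^{(n)})$ and read off the cuspidal coefficients via the B\"ocherer--Schmidt pullback formula for standard $L$-values.

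First, the discussion immediately preceding the theorem (together with Proposition~\ref{prop.integrality-of-partial-Eisen}) establishes $\calg\in M_k(\varGamma^{(n)})$, and moreover $\calg\in S_k(\varGamma^{(n)})$ when $l<k$. In the latter case, orthogonality of $S_k(\varGamma^{(n)})$ to $S_k(\varGamma^{(n)})^\perp$ forces $c_i=0$ for $i>d$ automatically, which already accounts for the ``moreover'' part of the theorem. It therefore suffices, for $1\le i\le d$, to identify $b_i$ in the expansion $\calg=\sum_{i=1}^e b_i F_i$ via $b_i=\langle\calg,F_i\rangle/\langle F_i,F_i\rangle$.

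Second, I would unfold. Since $\calg$ is by construction the trace to $\varGamma^{(n)}$ of $\cale_{2n}^{k,k-l}(Z_1,N^2A,N,\chi)\in M_k(\varGamma_0^{(n)}(N^2))$, and $\cale(Z_1,N^2A)$ is in turn the $(N^2A)$-th Fourier coefficient in $Z_2$ of $\mathfrak{E}_{2n}^{k,k-l}(Z_1,Z_2,N,\chi)$, the standard unfolding against the level-one form $F_i$ yields
\[
\langle\calg,F_i\rangle_{\varGamma^{(n)}}=\langle\cale_{2n}^{k,k-l}(\cdot,N^2A,N,\chi),F_i\rangle_{\varGamma_0^{(n)}(N^2)},
\]
which equals the $(N^2A)$-th Fourier coefficient in $Z_2$ of $\langle\mathfrak{E}_{2n}^{k,k-l}(\cdot,Z_2,N,\chi),F_i\rangle_{\varGamma_0^{(n)}(N^2),\,Z_1}$.

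Third, I would invoke the B\"ocherer--Schmidt pullback identity for twisted Siegel Eisenstein series (\cite{Boecherer-Schmidt00}, Appendix Theorem), the same identity on which Proposition~\ref{prop.algebraicity-standard-L} rests. After absorbing the normalizations $(2\pi\sqrt{-1})^{-(k-l)}\tau(\chi)^{-n-1}\sqrt{-1}^{-l}$, the differential operator $\stackrel{\,\,\,\,\circ}{\mathcal{D}}_{n,l}^{k-l}$, and the sum over $X\in M_n(\ZZ)/NM_n(\ZZ)$ weighted by $\chi(\det X)$ that are built into $\mathfrak{E}_{2n}^{k,k-l}$, this identity expresses the above inner product as a scalar multiple of $\Lambda(l-n,F_i,\chi,\mathrm{St})$ times a level-$N^2$ holomorphic avatar of $F_i$ in $Z_2$ whose $(N^2A)$-th Fourier coefficient is $\overline{c_{F_i}(A)}$. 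Extracting that coefficient and dividing by $\langle F_i,F_i\rangle$ gives $b_i=c(n,l)\,N^{nl}\,\Lambda(l-n,F_i,\chi,\mathrm{St})\,\overline{c_{F_i}(A)}$ with $c(n,l)=(-1)^{a(n,l)}2^{b(n,l)}$ depending only on $n$ and $l$.

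The main obstacle is the explicit bookkeeping of the rational constants, especially the factor $N^{nl}$. This factor is the combined effect of three contributions: the matrix Gauss sum identity $\sum_{X\in M_n(\ZZ)/NM_n(\ZZ)}\chi(\det X)\mathbf{e}(\mathrm{tr}({}^tRX)/N)$, which produces $\tau(\chi)^{n+1}\bar\chi(\det R)$ up to controlled powers of $N$ when $\det R$ is coprime to $N$; the index $[\varGamma^{(n)}:\varGamma_0^{(n)}(N^2)]$, which separates the two Petersson normalizations used in the unfolding; and the passage from the $A$-th Fourier coefficient on the level-$N^2$ side to the rescaled index $N^2A$ on the level-one side. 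All remaining transcendental quantities (powers of $\pi$, the $\Gamma$-factors $\Gamma(m)\prod\Gamma(2k-n-i)$, and the Petersson norm $\langle F_i,F_i\rangle$) are absorbed into the denominator of $\Lambda(l-n,F_i,\chi,\mathrm{St})$ by design of the completed $L$-value. The argument refines and corrects \cite{Katsurada15}, Appendix, in particular by making the case $l=k$ admissible through the Eisenstein-part bookkeeping $\{c_i\}_{i>d}$.
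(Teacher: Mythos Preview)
Your approach is essentially the same as the paper's: expand $\calg$ in the basis $\{F_i\}$, compute the cuspidal coefficients by Petersson inner product, and identify that inner product via the B\"ocherer--Schmidt pullback formula. The paper cites \cite{Boecherer-Schmidt00}, (3.24) rather than the Appendix Theorem, and carries out the $\Gamma$-factor bookkeeping (using $\Gamma_{2n}(l)=\pi^{n^2/2}\Gamma_n(l)\Gamma_n(l-n/2)$ and the duplication formula for $\Gamma_n(k-n/2)\Gamma_n(k-(n+1)/2)$) a bit more explicitly, and it places $F_i$ in the first slot of the inner product (evaluating the second variable at $-\overline{Z_2}$) so that a complex conjugation step $\overline{\Lambda(l-n,F_i,\overline\chi,\St)}=\Lambda(l-n,F_i,\chi,\St)$ is needed at the end; your ordering avoids that, but the argument is otherwise the same.
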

\begin{proof} Put 
\[{\mathfrak  G}_{2n}^{k,k-l}(Z_1,Z_2,N,\chi)= \sum_{\gamma \in \varGamma_0^{(n)}(N^2) \backslash \varGamma^{(n)}}{\mathfrak  E}_{2n}^{k,k-l}(|_k\gamma Z_1,Z_2,N,\chi).\]
Then we have
\begin{align*}
{\mathfrak  G}_{2n}^{k,k-l}(Z_1,Z_2,N,\chi)&=\sum_{A \in {\mathcal L}_n(\ZZ)_{>0}}{\mathcal G}_{2n}^{k,k-l}(Z_1,A,N,\chi){\bf e}({\rm tr}(AZ_2))
\end{align*}
By [\cite{Boecherer-Schmidt00},(3.24)], for any $\gamma \in \varGamma^{(n)}$ we have
\begin{align*}
&\langle F_i, {\mathfrak  G}_{2n}^{k,k-l}(|_k\gamma \     *,-\overline{Z_2},N,\chi) \rangle\\
=&\langle F_i|_k\gamma, {\mathfrak  G}_{2n}^{k,k-l}(|_k\gamma  \  *,-\overline{Z_2},N,\chi) \rangle\\
=&\langle F_i, {\mathfrak  G}_{2n}^{k,k-l}( *,-\overline{Z_2},N,\chi) \rangle\\
=&(-1)^{a'(n,l)}2^{b'(n,l)}N^{nl}\chi(-1)^n [\varGamma^{(n)}: \varGamma_0^{(n)}(N^2)]^{-1}\pi^{(l-k)n-(2n+1)l+n} \pi^{n(n+1)/2}\\
&\times L(l-n,F_i,\bar \chi,\mathrm{St}) \Gamma(l-n)\tau(\chi)^{-n-1}\sqrt{-1}^{-l}F_i(N^2Z_2)\\
&\times {\Gamma_{2n}(l) \Gamma_n(k-n/2)\Gamma_n(k-(n+1)/2) \over \Gamma_n(l)\Gamma_n(l-n/2)},
\end{align*}
with $a'(n,l),b'(n,l) \in \ZZ$. 
 We note that we take the  normalized Petersson inner product. We also note that 
\[\Gamma_{2n}(l)=\pi^{n^2/2}\Gamma_n(l)\Gamma_n(l-n/2),\]
and
\[\Gamma_n(k-n/2)\Gamma_n(k-(n+1)/2)=2^{\gamma'(n,l)}\pi^{n^2/2}\prod_{i=1}^n \Gamma(2k-n-i)\]
with an integer $\gamma'(n,l)$.
Hence we have 
\begin{align*}
&\langle F_i, {\mathfrak  G}_{2n}^{k,k-l}(|_k\gamma *,-\overline{Z_2},N,\chi) \rangle\\
&=c(n,l)[\varGamma^{(n)}: \varGamma_0^{(n)}(N^2)]^{-1} N^{nl}\Lambda(l-n,F_i,\overline{\chi},\St)\langle F_i,F_i \rangle F_i(N^2Z_2), 
\end{align*}
where  $c(n,l)=(-1)^{a(n,l)}2^{b(n,l)}$ with $a(n,l),b(n,l)$ integers.
On the other hand, we have 
$$\langle F_i, {\mathfrak  G}_{2n}^{k,k-l}(*,-\overline{Z_2},N,\chi) \rangle=\sum_{A \in {\mathcal L}_n(\ZZ)_{>0}}
\langle F_i, {\mathcal G}_{2n}^{k,k-l}(*,A,N,\chi) \rangle {\bf e}({\rm tr}(AZ_2)).$$
Hence we have
$$\langle F_i, {\mathcal G}_{2n}^{k,k-l}(*,A,N,\chi) \rangle=c(n,l) N^{nl}\Lambda(l-n,F_i,\overline{\chi},\St)\langle F_i,F_i \rangle c_{F_i}(N^{-2}A)$$
for any $A.$ Now ${\mathcal G}_{2n}^{k,k-l}(Z,A,N,\chi) $ can be expressed as
$${\mathcal G}_{2n}^{k,k-l}(Z,A,N,\chi)=\sum_{i=1}^e c_iF_i(Z) $$ 
with $c_i \in \CC.$ For $1 \le i \le d$  we have
$$\langle F_i,{\mathcal G}_{2n}^{k,k-l}(*,A,N,\chi)\rangle=\overline {c_i} \langle F_i,F_i \rangle.$$
Hence we have
$$c_i=\overline{c(n,l)N^{nl}  \Lambda(l-n,F_i,\overline{\chi},\St)\langle F_i,F_i \rangle c_{F_i}(N^{-2}A)}.$$
We note that $\overline{ \Lambda(l-n,F_i,\overline{\chi},\St)}= \Lambda(l-n,F_i,\chi,\St).$ This proves the assertion.
\end{proof}
\begin{remark}
\label{rem.corrections}
There are errors in {\rm [\cite{Katsurada15}, Appendix]}. \\
{\rm (1)} The factor $\eta^*(\gamma)$  is missing in $E_{n,l}(Z,M,\eta,s)$ on {\rm [\cite{Katsurada15}, page 125]}, and it should be defined as 
\[E_{n,l}(Z,M,\eta,s)=L(1-l-2s,\eta)\prod_{i=1}^{[n/2]} L(1-2l-4s+2i,\eta^2)$$
$$\times  \det ({\rm Im}(Z)))^s \sum_{\gamma \in \varGamma_{\infty}^{(n)} \backslash \varGamma_0^{(n)}(M)} j(\gamma,Z)^{-l} \eta^*(\gamma)|j(\gamma,Z)|^{-2s}.\]
Then $E^*_{n,l}(Z,M,\eta,s)=E_{n,l}|_l W_M(Z,M,\eta,s)$
with $W_M=\mattwo(O;-1_n;M1_n;O)$ coincides with the Eisenstein series $E^*_{n,l}(Z,M,\eta,s)$ in the present paper 
up to elementary factor. However, to quote several results 
in \cite{Boecherer-Schmidt00} smoothly, we define $E^*_{n,l}(Z,M,\eta,s)$ as in the present paper. Accordingly we define 
${\mathcal G}_{2n}^{k,k-l}(Z,A,N,\chi)$ as in our paper.
 With these changes,
Propositions 5.1 and 5.2, and (1) of Theorem 5.3 in \cite{Katsurada15} should be replaced with 
Corollary \ref{cor.FC-Siegel-Eisenstein}, Corollary \ref{cor.FC-Siegel-Eisenstein2}, and Proposition \ref{prop.integrality-of-partial-Eisen}, respectively, in the present paper. \\
{\rm (2}) In \cite{Katsurada15}, we defined ${\bf L}(m,F,\chi,\St)$ as 
\[{\bf L}(m,F,\chi,\St)= \Gamma_{\CC}(m)(\prod_{i=1}^n \Gamma_{\CC}(m+k-i)) {L(m,F,\chi,\St) \over \tau(\chi)^{n+1}\langle F, \ F \rangle},\]
where $\Gamma_{\CC}(s)=2(2\pi)^{-s}\Gamma(s)$.
However, the factor  $\sqrt{-1}^{m+n}$ should be added in the denominator on the right-hand side of the above definition. With this correction,
{\rm [\cite{Katsurada15}, Theorem 2.2]} remains valid. 
Moreover, we have 
\begin{align*}
&{\bf L}(l-n,F,\chi,\St)\\
&= {\prod_{i=1}^n  \Gamma_{\CC}(l-n+k-i) \over N^{ln}c(n,l)
\prod_{i=1}^n \Gamma(2k-n-i) \pi^{-n(n+1)/2+nk+(n+1)m}}\Lambda(l-n,F,\chi,\St).
\end{align*}
We note that
\[{\displaystyle \prod_{i=1}^n  \Gamma_{\CC}(l-n+k-i) \over \displaystyle N^{ln}c(n,l)
\prod_{i=1}^n \Gamma(2k-n-i) \pi^{-n(n+1)/2+nk+(n+1)m}}\]
 is a rational number, and for a prime number $p$ not dividing $N(2k-1)!$,
it is $p$-unit. Therefore,  (2) of Theorem 5.3 in \cite{Katsurada15} should be corrected as follows:

\bigskip

{ \it Put 
\begin{align*}
&\widetilde {\mathcal G}_{2n}^{k,k-l}(Z,N^2A,N,\chi)\\
&= {\prod_{i=1}^n  \Gamma_{\CC}(l-n+k-i) \over N^{ln}c(n,l)
\prod_{i=1}^n \Gamma(2k-n-i) \pi^{-n(n+1)/2+nk+(n+1)m}}\\
& \times {\mathcal G}_{2n}^{k,k-l}(Z,N^2A,N,\chi).
\end{align*}
Then $\widetilde {\mathcal G}_{2n}^{k,k-l}(Z,N^2A,N,\chi)$ belongs to $\frkS_k(\varGamma^{(n)})(\frkO_{\QQ(F,\chi,\zeta_N)})_{\frkP}$ for any prime ideal $\frkP$ of $\QQ(F,\chi,\zeta_N)$ not dividing $N(2k-1)!$, 
and  we have
$$\widetilde {\mathcal G}_{2n}^{k,k-l}(Z,N^2A,N,\chi) =\sum_{i=1}^d {\bf L}(l-n,F_i, \chi, {\rm St}) \overline{c_{F_i}(A)}F_i(Z).$$}
 
\end{remark}

\section{Proof of the main result}

\begin{lemma}
\label{lem.fundamental-lemma}
Let $r \ge 2$ and let $\{F_1,\ldots,F_r \}$ be Hecke eigenforms   $M_{k}(\varGamma^{(n)};\lambda_i)$ linearly independent over $\CC,$ and $G$ an element of $M_{k}(\varGamma^{(n)}).$ Write
$$F_{i}(Z)=\sum_{A} c_{F_{i}}(A) {\bf e}({\rm tr}(AZ))$$
for $i=1,...r$ and
$$G(Z)=\sum_{A} c_G(A) {\bf e}({\rm tr}(AZ)).$$
Let $K$ be the composite field of $\QQ(F_1),\ldots, \QQ(F_r)$,  and $L$ a finite extension of $K$. Let $N$ be a positive integer. Assume that
 \begin{itemize}
\item[(1)] there exists an element $\alpha \in K$ such that $c_G(A)$ belongs to $\alpha {\frkO}_{L}[N^{-1}]$ for any $A \in {\mathcal H}_n(\ZZ)_{>0}$ 
\item[(2)] there exist $c_{i} \in L \ (i=1,...,r)$ and $A \in {\mathcal H}_{n}(\ZZ)_{>0} $ such that 
 $$G(Z)=\sum_{i=1}^r c_{i} F_{i}(Z).$$
\end{itemize}
Then for  any elements $T_1,\ldots,T_{r-1} \in {\bf L}_n'$ and 
$A \in {\mathcal H}_{n}(\ZZ)_{>0}$ we have
$$\prod_{i=1}^{r-1}(\lambda_{F_1}(T_i)-\lambda_{F_{i+1}}(T_i)) c_{1}c_{F_1}(A) \in \alpha  {\frkO}_{L}[N^{-1}].$$
\end{lemma}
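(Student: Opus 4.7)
The plan is to construct a combination of Hecke operators in ${\bf L}_n'$ that annihilates $F_2,\ldots,F_r$ and isolates the $F_1$-component of $G$. Concretely, consider the operator
\[\calt=\prod_{i=1}^{r-1}\bigl(T_i-\lambda_{F_{i+1}}(T_i)\bigr)\]
acting on $M_k(\varGamma^{(n)})\otimes_\QQ L$. Since each $F_j$ is a Hecke eigenform with $T_iF_j=\lambda_{F_j}(T_i)F_j$, applying $\calt$ to the decomposition $G=\sum_{j=1}^r c_jF_j$ gives
\[\calt G=\sum_{j=1}^r c_j\prod_{i=1}^{r-1}\bigl(\lambda_{F_j}(T_i)-\lambda_{F_{i+1}}(T_i)\bigr)F_j.\]
For each $j\geq 2$, the factor corresponding to $i=j-1$ equals $\lambda_{F_j}(T_{j-1})-\lambda_{F_j}(T_{j-1})=0$, so every summand with $j\geq 2$ vanishes and the sum collapses to
\[\calt G=c_1\prod_{i=1}^{r-1}\bigl(\lambda_{F_1}(T_i)-\lambda_{F_{i+1}}(T_i)\bigr)F_1.\]

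Next I would invoke two standard integrality properties of the Hecke ring ${\bf L}_n'$. First, for any $T\in{\bf L}_n'$ and any Hecke eigenform $F_j$, the eigenvalue $\lambda_{F_j}(T)$ is an algebraic integer lying in $\frkO_{\QQ(F_j)}\subset\frkO_L$. Second, writing a double coset in ${\bf L}_n'$ as a finite disjoint union of right cosets with integral representatives and applying the standard Fourier-expansion formula for $G|_kT$ shows that if the Fourier coefficients of $G$ lie in $\alpha\frkO_L[N^{-1}]$, then so do those of $G|_kT$. Iterating through the $r-1$ factors of $\calt$, the Fourier coefficients of $\calt G$ all lie in $\alpha\frkO_L[N^{-1}]$. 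Reading off the $A$-th Fourier coefficient of the displayed identity yields
\[c_1c_{F_1}(A)\prod_{i=1}^{r-1}\bigl(\lambda_{F_1}(T_i)-\lambda_{F_{i+1}}(T_i)\bigr)\in\alpha\frkO_L[N^{-1}],\]
which is the desired conclusion.

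The main point that requires care is the preservation of Fourier-coefficient integrality under the action of operators in ${\bf L}_n'$. One must verify that for a coset decomposition $\varGamma^{(n)}\gamma\varGamma^{(n)}=\bigsqcup\varGamma^{(n)}\alpha_\nu$ with integral $\alpha_\nu\in GSp_n^+(\QQ)\cap M_{2n}(\ZZ)$, each $G|_k\alpha_\nu$ still admits a Fourier expansion whose coefficients are $\frkO_L[N^{-1}]$-linear combinations of those of $G$. This is the only step that genuinely uses the hypothesis that $T_i$ lies in the \emph{integral} Hecke ring ${\bf L}_n'$ rather than in ${\bf L}_n$; once it is granted, the remainder of the argument is purely linear-algebraic.
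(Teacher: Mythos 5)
Your argument is correct and is essentially the paper's proof: the paper proceeds by induction on $r$, applying $T_{r-1}-\lambda_{F_r}(T_{r-1})$ to kill $F_r$ and citing [Katsurada08, Theorem 4.1 and Proposition 4.2] for exactly the two integrality facts you isolate (integrality of eigenvalues of $T\in{\bf L}_n'$ and preservation of $\alpha\frkO_L[N^{-1}]$-coefficients under $G\mapsto G|T$), and unrolling that induction yields precisely your single product operator $\prod_{i=1}^{r-1}(T_i-\lambda_{F_{i+1}}(T_i))$.
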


\begin{proof}
We prove the induction on $r$. 
The assertion clearly holds for $r=2$. Let $r \ge 3$ and suppose that the assertion holds for any $r'$ such that $2 \le r' \le r-1$.
We have 
 $$G|T_{r-1}(Z)=\sum_{i=1}^r\lambda_{F_i}(T_{r-1}) c_{i}  F_{i}(Z),$$
and we have
$$G|T_{r-1}(Z) - \lambda_{F_r}(T_{r-1})G(Z)=\sum_{i=1}^{r-1}(\lambda_{F_i}(T_{r-1})-\lambda_{F_r}(T_{r-1})) c_{i}  F_{i}(Z).$$
By Theorem 4.1 and Proposition 4.2 of \cite{Katsurada08}, we have
$$G|T_{r-1}(Z) - \lambda_{T_{r-1}}G(Z) \in \alpha S_{k}(\varGamma^{(n)})({\frkO}_{L}[N^{-1}])$$
Hence, by the induction assumption we prove the assertion.

\end{proof}

\bigskip

{\bf Proof of Theorem \ref{th.main-result}}
Let $b(n,l)$ be the integer in Theorem \ref{th.pullback-Eisenstein}, 
and put $\alpha(n,k)=\max_{2 \le l \le k-n-2 \atop  l \equiv 0 \text{ mod } 2} b(n,l)$. Then, $a_{n,l}{\mathcal G}_{2n}^{k,k-l}(Z,N^2A,N,\chi) \in 2^{-\alpha(n,k)}M_{k}(\varGamma^{(n)})({\frkO}_{\QQ(\chi,\zeta_N)}[N^{-1}])$. Thus, by   Theorem \ref{th.pullback-Eisenstein} and Lemma \ref{lem.fundamental-lemma}, for any $B  \in \calh_n(\ZZ)_{>0}$,
and $T_1,\ldots,T_e \in {\bf L}_n'$, the value
$$\prod_{i=1}^{r-1}(\lambda_{F_1}(T_i)-\lambda_{F_{i+1}}(T_i))
\Lambda(l-n,F,\chi,\St)\bar c_F(A)c_F(B) $$
belongs to $(2^{\alpha(n,k)}A_{n,k})^{-1} {\frkO}_{L_{n,k}(\chi,\zeta_N)}[N^{-1}],$
where $e=\dim_\CC M_{k}(\varGamma^{(n)})$, and $L_{n,k}$ is the field stated in Section 1. In particular for any
$v \in \widetilde \frkE_{F}$, the value
$v \Lambda(l-n,F,\chi,\St)\bar c_F(A)c_F(B)$
belongs to $(2^{\alpha(n,k)}A_{n,k})^{-1} {\frkO}_{L_{n,k}(\chi,\zeta_N)}[N^{-1}].$
On the other hand, by Proposition \ref{prop.algebraicity-standard-L}, the value $\Lambda(l-n,F,\chi,\St)\bar c_F(A)c_F(B)$ belongs to $\QQ(F,\chi)$, and hence
we have 
$$v \Lambda(l-n,F,\chi,\St)\bar c_F(A)c_F(B)  \in (2^{\alpha(n,k)}A_{n,k})^{-1} {\frkO}_{\QQ(F,\chi)}[N^{-1}].$$
This implies that we have
\[  \frkI(l-n,F,\chi) \subset  \langle (2^{\alpha(n,k)} A_{n,k} \widetilde \frkE_{F})^{-1}\rangle_{\frkO_{\QQ(F,\chi)}[N^{-1}]}.\]
\begin{remark} \label{rem.general-congruence}
 Let the notation be as in Lemma \ref{lem.fundamental-lemma}. Then we have the following.

Let $\frkp$ be a prime ideal of $K$. Assume that $c_1 c_{F_1}(A)$ belongs to $K$ and that $\ord_\frkp(c_1c_{F_1}(A))<0$ for some $A \in \calh_n(\ZZ)_{>0}$. Then there exists $i \not=2$ such that we have
\[\lambda_{F_i}(T) \equiv \lambda_{F_1}(T) \text{ mod } \frkp \quad 
\text{ for any } T \in {\bf L}_n'.\]

This is a generalization of \rm{[\cite{Katsurada08}, Lemma 5.1]}, and it can be proved in the same way. Let $K_{n,k}$ be the field defined in Section 2. Then, applying the above result to $L=K_{n,k}(\chi,\zeta_N)$, and using a corrected version of [\cite{Katsurada15}, Theorem 5.3] in Remark \ref{rem.corrections} (2), we can remedy the proof of  [\cite{Katsurada15}, Theorem 3.1].

We also remark that the $M(2l-1)!$ in {\rm [\cite{Katsurada15}, Theorem 3.1]} should be $M(2k-1)!$.
\end{remark}

\section{Boundedness of special values of products of Hecke $L$-functions}
For an element $f(z)=\sum_{m=1}^\infty  c_f(m){\bf e}(mz) \in S_k(SL_2(\ZZ))$ and a Dirichlet character $\chi$, we define Hecke's $L$ function $L(s,f,\chi)$ as
\[L(s,f,\chi)=\sum_{m=1}^\infty {c_f(m) \over m^s}.\]
Let $f$ be a primitive form. Then, for two positive integers $l_1,l_2 \le k-1$ and Dirichlet characters $\chi_1,\chi_2$ such that $\chi_1(-1)\chi_2(-1)=(-1)^{l_1+l_2+1},$ the value 
$${\Gamma_{\CC}(l_1)\Gamma_{\CC}(l_2)L(l_1,f,\chi_1)L(l_2,f,\chi_2) \over \sqrt{-1}^{l_1+l_2+1}\tau((\chi_1 \chi_2)_0) \langle f , \ f \rangle}$$
belongs to $\QQ(f, \chi_1, \chi_2)$
 (cf. \cite{Shimura76}). We denote this value by ${\bf L}(l_1,l_2;f;\chi_1,\chi_2).$ In particular, we put
\[{\bf L}(l_1,l_2;f)={\bf L}(l_1,l_2;f;\chi_1,\chi_2)\]
if $\chi_1$ and $\chi_2$ are  the principal characters.

\begin{theorem}
\label{th.main-result-2}
Let $f$ be a primitive form in $S_k(SL_2(\ZZ))$. 
Then  we have
\[ {\bf L}(l_1,l_2;f;\chi_1,\chi_2) \in  \langle (2^{b_k}\zeta(1-k)(k!)^2\widetilde \frkD_{f})^{-1} \rangle_{\frkO_{\QQ(f,\chi_1,\chi_2)}[(N_1N_2)^{-1}]}\]
with some non-negative integer $b_k$
 for any integers $l_1$ and $\l_2$ and primitive characters $\chi_1$ and $\chi_2$ of conductors $N_1$ and $N_2$, respectively, satisfying the following conditions: 
\begin{align*}
 (\chi_1\chi_2)(-1)=(-1)^{l_1+l_2+1}. \tag{D1} \\
 k-l_1+1 \le l_2 \le l_1-1 \le k-2 \tag{D2}\\
\text{ Either } l_1 \ge l_2+2, \text{ or } l_1=l_2+1 \text{ and } \chi_1 \text{ or } \chi_2 \text{ is  non-trivial} \tag{D3}
\end{align*} 
\end{theorem}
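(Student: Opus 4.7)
The strategy parallels that of Theorem~\ref{th.main-result}, adapted to the setting of two Hecke $L$-values of a single elliptic primitive form $f$. First I would construct a two-variable holomorphic kernel $\scrk(z_1,z_2) \in S_k(SL_2(\ZZ)) \otimes S_k(SL_2(\ZZ))$ starting from a product of elliptic Eisenstein series $E^*_{1,l_1}(z_1;N_1,\chi_1) \cdot E^*_{1,l_2}(z_2;N_2,\chi_2)$ (the $n=1$ analog of the kernel ${\mathfrak  E}_{2n}^{k,k-l}$ in Section 3), followed by a holomorphic projection in each variable to land in $S_k(SL_2(\ZZ))$. By the classical Rankin--Selberg / Shimura integral representation of critical values of Hecke $L$-functions, its spectral expansion in a Hecke-eigenform basis $\{f_i\}$ takes the form $\sum_i c_k \, \mathbf{L}(l_1,l_2;f_i;\chi_1,\chi_2) \, f_i(z_1) f_i(z_2)$ with an explicit nonzero rational constant $c_k$. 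The hypotheses (D1) and (D2) ensure that the two Eisenstein factors are holomorphic at the critical points with correct parity and that the holomorphic projection in the weight $k$ ``diagonal'' setting is defined; (D3) excludes the boundary case where a non-cuspidal contribution would contaminate the projection.

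Next I would verify integrality of the Fourier coefficients of $\scrk$ using Proposition~\ref{prop.FC-Siegel-Eisenstein} and Corollary~\ref{cor.FC-Siegel-Eisenstein} specialized to $n=1$. The factor $\zeta(1-k)$ arises from the constant term of a weight-$k$ elliptic Eisenstein factor (bounded via Clausen--von~Staudt), while the two factors of $k!$ come from the $\Gamma$-factor / Bernoulli denominator in each variable (the $n=1$ specialization of $a_{n,l}=\prod_{i=1}^n(2l-2i)(2l-2i+1)!$ in Corollary~\ref{cor.FC-Siegel-Eisenstein2}). A $q$-expansion / algebraic modular form argument in the spirit of Proposition~\ref{prop.integrality-of-partial-Eisen} upgrades integrality from the level $\Gamma_0(N_i^2)$ to full level $SL_2(\ZZ)$ in each variable, yielding $\scrk \in \langle (2^{b_k} \zeta(1-k)(k!)^2)^{-1}\rangle$ times $S_k(SL_2(\ZZ))(\frkO_{\QQ(\chi_1,\chi_2,\zeta_{N_1N_2})}[(N_1N_2)^{-1}]) \otimes S_k(SL_2(\ZZ))(\frkO_{\QQ(\chi_1,\chi_2,\zeta_{N_1N_2})}[(N_1N_2)^{-1}])$.

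Finally, viewing $\scrk$ as a modular form in $z_1$ with coefficients in $S_k(SL_2(\ZZ))$ regarded as scalars in a finite extension $L \supset K_{1,k}(\chi_1,\chi_2,\zeta_{N_1N_2})$, I would apply Lemma~\ref{lem.fundamental-lemma} to extract the $f$-isotypic component, paying exactly one factor of $\widetilde\frkD_f$ in the denominator. Evaluating at the Fourier index $A=B=1$ (legitimate since $f$ is primitive, so $c_f(1)=1$) and invoking Shimura's rationality $\mathbf{L}(l_1,l_2;f;\chi_1,\chi_2) \in \QQ(f,\chi_1,\chi_2)$ \cite{Shimura76} to descend the bound from $L$ to $\QQ(f,\chi_1,\chi_2)$ yields the asserted containment. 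The main obstacle is step one: identifying a kernel whose spectral decomposition is simultaneously cusp-form-diagonal in $\{f_i \otimes f_j\}$ and proportional to a \emph{product} of two Hecke $L$-values (rather than a genus-$2$ standard $L$-function as in Theorem~\ref{th.main-result}); in particular, verifying that the holomorphic projection in each variable produces the correct rational constant and that the exclusions in (D3) precisely avoid the points where this projection degenerates or where an Eisenstein component with trivial character infiltrates the cuspidal spectrum.
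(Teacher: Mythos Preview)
Your proposed kernel has a structural flaw that prevents the argument from going through. A product of two Eisenstein series in \emph{separate} variables, $E^*_{1,l_1}(z_1;N_1,\chi_1)\cdot E^*_{1,l_2}(z_2;N_2,\chi_2)$, factors completely as $K_1(z_1)K_2(z_2)$; any subsequent holomorphic/cuspidal projection in each variable yields $(\mathrm{proj}\,K_1)(z_1)\cdot(\mathrm{proj}\,K_2)(z_2)=\bigl(\sum_i a_i f_i(z_1)\bigr)\bigl(\sum_j b_j f_j(z_2)\bigr)$, whose expansion in $f_i\otimes f_j$ is rank one, not diagonal. So $\scrk$ cannot have the form $\sum_i c_i f_i(z_1)f_i(z_2)$ unless $\dim S_k=1$. (This is also why your ``$n=1$ analog'' identification is off: ${\mathfrak E}_{2n}^{k,k-l}$ is the diagonal restriction of a single degree-$2n$ Siegel Eisenstein series, which couples $z_1$ and $z_2$ nontrivially; it is not a product of two degree-$n$ series.) Moreover, even the individual factors $a_i=\langle K_1,f_i\rangle$ would have to encode a single period $L(l_1,f_i,\chi_1)/\omega_\pm(f_i)$, which is not what a Petersson pairing against an Eisenstein series produces.

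The paper's proof is a \emph{one}-variable argument following Shimura \cite{Shimura76}. One builds a weight-$(l_1-l_2+1)$ Eisenstein-type form $g$ on $\varGamma_0^{(1)}(N_1N_2)$ with $L(s,g)=L(s,\chi_1)L(s-l_1+l_2,\chi_2)$, multiplies it by a Maass--Shimura derivative $\delta_{\lambda}^{(k-l_1-1)}\widetilde G_{\lambda,N_1N_2}(z,\chi_1\chi_2)$ of a weight-$\lambda$ Eisenstein series ($\lambda=l_1+l_2+1-k$), and extracts the holomorphic part $h_0\in M_k(\varGamma_0^{(1)}(N_1N_2))$. Rankin--Selberg unfolding of $\langle f_i,h_0\rangle$ then produces the \emph{product} $L(l_1,f_i,\chi_1)L(l_2,f_i,\chi_2)/\langle f_i,f_i\rangle$ directly; this is where conditions (D2) and (D3) enter (holomorphy of $G_{\lambda}$). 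After tracing $h_0$ to level $1$ and subtracting the weight-$k$ Eisenstein component $\alpha\widetilde G_k$ (which is where $\zeta(1-k)$ appears), one obtains a cusp form $h$ with controlled denominators, and Lemma~\ref{lem.fundamental-lemma} is applied to $h$, not to a two-variable object. The two factors of $k!$ come from the Fourier coefficients of $g$ and of $\widetilde G_{\lambda}$, both living in the same variable $z$.
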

\begin{proof}
 The proof will proceed by a careful analysis of the proof of [\cite{Shimura76}, Theorem 4] combined with the argument in Theorem \ref{th.main-result}. 
For a positive integer $\lambda \ge 2$ and 
a Dirichlet character $\omega$  mod $N$ such that $\omega(-1)=(-1)^\lambda$ we define the Eisenstein series
$G_{\lambda,N}(z,s,\omega) \ ( z \in {\bf H}_1, s \in \CC)$ by
\[G_{\lambda,N}(z,s,\omega)=\sum_{\gamma \in \varGamma_{\infty} \backslash \varGamma^{(1)}_0(N)} \omega(d) (cz+d)^{-\lambda}|cz+d|^{-2s} \qquad \gamma=\begin{pmatrix} a & b \\ c & d \end{pmatrix},\]
where $\varGamma_\infty=\{\pm \begin{pmatrix} 1 & m \\ 0 & 1 \end{pmatrix} \ \Bigl| \ m \in \ZZ \}.$
It is well known that $G_{\lambda,N}(z,s,\omega)$ is finite at $s=0$ as a function of $s$, and put
\[G_{\lambda,N}(z,\omega)=G_{\lambda,N}(z,0,\omega).\]
$G_{\lambda,N}(z,\omega)$ is a (holomorphic) modular form of weight $\lambda$ and character $\bar \omega$ for $\varGamma^{(1)}_0(N)$  if  $\lambda \ge 3$ or $\omega$ is non-trivial. In the case $\lambda=2$ and $\omega$ is trivial,  $G_{2,N}(z,\omega)$ is a nearly automorphic form of weight $2$ for $\varGamma^{(1)}_0(N)$ in the sense of Shimura \cite{Shimura86}. We also put 
\[\widetilde G_{\lambda,N}(z,\omega)={2\Gamma(\lambda) \over (-2\pi \sqrt{-1})^{\lambda}\tau(\omega_0)}L_N(\lambda,\omega)G_{\lambda,N}(z,\omega),\]
where  $L_N(s,\omega)=L(s,\omega)\prod_{p|N}(1-p^{-s}\omega(p))$.
  Now let $N_i$ be the conductor of $\chi_i$ for $i=1,2$. Then, by [\cite{Miyake89}, Theorem 4.7.1] there exists
a modular form $g$ of weight $l_1-l_2+1$ and character $\chi_1\chi_2$ for $\varGamma_0^{(1)}(N_1N_2)$ such that
\begin{align*} c_g(0)&=\begin{cases} 0 & \text{ if } \chi_1 \text{ is non-trivial } \\
{-1(1-N_1N_2) \over 24}  & \text{ if } l_1-l_2=1 \text{ and both } \chi_1 \text{ and } \chi_2 \text{ are trivial } \\
{-B_{l_1-l_2+1,\chi_1\chi_2} \over 2(l_1-l_2+1)} & \text{ otherwise},
\end{cases} \\
c_g(m)&=\sum_{0 < d|m} \chi_1(m/d)\chi_2(d)d^{l_1-l_2} \quad (m \ge 1), \end{align*}
and 
\[L(s,g)=L(s,\chi_1)L(s-l_1+l_2,\chi_2).\]
Since we have $k \ge l_2,l_1$,  all the Fourier coefficients of $g$ belong to $(k!)^{-1}\frkO_{\QQ(\chi_1,\chi_2)}[(N_1N_2)^{-1}]$. Put $\lambda=-k+l_1+l_2+1$. 
Let $\delta_\lambda^{(r)}$ be the differential operator in \cite{Shimura76}, page 788. Then, [\cite{Shimura76}, Lemma 7] we have
\[g \delta_{-k+l_1+l_2+1}^{(k-l_1-1)}\widetilde G_{-k+l_1+l_2+1,N_1N_2}(z,\chi_1\chi_2)=\sum_{\nu=0}^r \delta_{k-2\nu}^{(\nu)} h_\nu(z)\]
with some $r<k/2$, and $h_\nu \in M_{k-2\nu}(\varGamma^{(1)}_0(N_1N_2))$.
By [\cite{Shimura76}, (3.3) and (3.4)] and the assumption, all the Fourier coefficients of  $\widetilde G_{-k+l_1+l_2+1,N_1N_2}(z,\chi_1\chi_2)$ belongs to $(k!)^{-1}\frkO_{\QQ(\chi_1\chi_2)}[(N_1N_2)^{-1}]$ if 
$-k+l_1+l_2+1 \ge 3$, or  $\chi_1\chi_2$ is  non-trivial. Moreover, by  [\cite{Shimura76}, page 795],  
$\widetilde G_{2,N_1N_2}(z,\chi_1\chi_2)$ is expressed as
\[\widetilde G_{2,N_1N_2}(z,\chi_1\chi_2)={c \over 4\pi y} +\sum_{n=0}^\infty c_n{\bf e}(nz),\]
with $c,c_n \in 2^{-1}\frkO_{\QQ(\chi_1\chi_2)}[(N_1N_2)^{-1}]$   if $-k+l_1+l_2+1=2$ and $\chi_1\chi_2$ is trivial. Hence, by the construction of $h_{0}$,   all the Fourier coefficients of $h_{0}$ belong to $((k!)^2)^{-1}\frkO_{\QQ(\chi_1,\chi_2)}[(N_1N_2)^{-1}]$.
 Let $f_1,\ldots,f_d$ be a basis of $S_k(SL_2(\ZZ))$ consisting of primitive forms such that $f_1=f$. Then, by [\cite{Shimura76}, Theorem 2, Lemmas 1 and 7], we have
\[{\bf L}(l_1,l_2,f_i;\chi_1,\chi_2)\langle f_i, \ f_i \rangle=d_0[SL_2(\ZZ):\varGamma^{(1)}_0(N_1N_2)]\langle f,h_{0} \rangle \]
for any $i=1,\ldots,d$,
where $d_0=(-1)^{a(k,l_1,l_2)}2^{b(k,l_1,l_2)}$ with some $a(k,l_1,l_2),b(k,l_1,l_2) \in \ZZ$. 
(We note that  the Petersson product $\langle *, \ * \rangle$ in our paper is ${\pi \over 3}$ times that  in \cite{Shimura76}).
Define ${\bf h}_{0}(z)$ by
\[{\bf h}_{0}=d_0\sum_{\gamma \in \varGamma^{(1)}_0(N_1N_1) \backslash SL_2(\ZZ)} h_{0}|\gamma(z).\]
Then, ${\bf h}_{0}$ belongs to $M_k(SL_2(\ZZ))$.
We have 
\[\langle f_i, \ h_{0}|\gamma \rangle=\langle f_i , \ h_{0} \rangle,\]
for any $\gamma \in SL_2(\ZZ)$, and hence
\[ {\bf L}(l_1,l_2,f_i;\chi_1,\chi_2)\langle f_i, \ f_i \rangle=\langle f_i,{\bf h}_{0} \rangle,\]
and hence we have
\[{\bf h}_{0}(z)=\alpha \widetilde G_k(z)+\sum_{i=1}^d  {\bf L}(l_1,l_2,f_i;\chi_1,\chi_2)f_i(z)\]
with $\alpha \in \CC$.
Put $b_k=\min \{\min_{l_1,l_2} b(k,l_1.l_2), 0\}$ and $a_k=2^{b_k}(k!)^2$,  where $l_1$ and $l_2$ run over all integers satisfying the conditions (D2) and (D3).
By  q expansion principle, for any $\gamma \in SL_2(\ZZ)$, $h_{0}|\gamma$ belongs to 
$M_k(\varGamma^{(1)}(N_1N_2))(\langle a_k^{-1} \rangle_{\frkO_{\QQ(\chi_1,\chi_2,\zeta_N)}[(N_1N_2)^{-1}]})$.Therefore ${\bf h}_{0}$ belongs to 
$M_k(\varGamma^{(1)}(N_1N_2))(\langle a_k^{-1} \rangle_{\frkO_{\QQ(\chi_1,\chi_2,\zeta_N)} [(N_1N_2)^{-1}]}) \cap M_k(SL_2(\ZZ))$. Put $h={\bf h}_0-\alpha \widetilde G_k$.
Then  all the Fourier coefficients of $h$ belong to 
$\langle (2^{b_k}k!^2 \zeta(1-k))^{-1}\rangle_{\frkO_{\QQ(\chi_1,\chi_2,\zeta_N)}[(N_1N_2)^{-1}]}$. 
We note that ${\bf L}(l_1,l_2;f;\chi_1,\chi_2)$ belongs to $\QQ(f,\chi_1,\chi_2)$. Thus, using Lemma \ref{lem.fundamental-lemma}, we can prove the assertion in the same way as Theorem \ref{th.main-result}.
\end{proof}

\begin{corollary} \label{cor.main-result-2}
 Let $f$ be a primitive form in $S_k(SL_2(\ZZ))$. 
Let $\calq_f$ be the set  of prime ideals $\frkp$ of $\QQ(f)$ such that 
\[\ord_{\frkp}(N_{\QQ(f,\chi_1,\chi_2)/\QQ(f)}( {\bf L}(l_1,l_2;f;\chi_1,\chi_2)))
 <0 \]
for some positive integers $l_1,l_2$ and primitive characters $\chi_1,\chi_2$ with $\frkp \nmid m_{\chi_1}, m_{\chi_2}$ 
satisfying the condition $\mathrm{(D1),(D2),(D3)}$.
Then $\calq_f$ is a finite set. Moreover, there exists a positive integer $r$ such that  we have 
\[\ord_{\frkq}({\bf L}(l_1,l_2;f;\chi_1,\chi_2)) \ge -r[\QQ(f,\chi_1,\chi_2):\QQ(f)]\]
for any prime ideal $\frkq$ of $\QQ(f,\chi)$  lying above a prime ideal in $\calq_f$ and  integer $l_1,l_2$ and  primitive  characters $\chi_1,\chi_2$ satisfying the above conditions.   
\end{corollary}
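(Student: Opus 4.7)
The plan is to imitate the proof of Corollary \ref{cor.main-result1-2}, with Theorem \ref{th.main-result-2} replacing Theorem \ref{th.main-result}. All of the analytic content has been absorbed into Theorem \ref{th.main-result-2}, so the present corollary reduces to bookkeeping of fractional ideals under ramification and norms.

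First I would set $\frkM := 2^{b_k}\zeta(1-k)(k!)^2\widetilde \frkD_{f}$, viewed as a nonzero fractional ideal of $\QQ(f)$, and let $S$ be the finite set of prime ideals of $\QQ(f)$ appearing with strictly positive exponent in the prime factorization of $\frkM$. By Theorem \ref{th.main-result-2}, for any admissible $(l_1,l_2,\chi_1,\chi_2)$ and any prime $\frkq$ of $\QQ(f,\chi_1,\chi_2)$ with $\frkq \nmid N_1N_2$ one obtains the uniform lower bound $\ord_\frkq({\bf L}(l_1,l_2;f;\chi_1,\chi_2)) \ge -\ord_\frkq(\frkM)$. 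If $\frkp$ lies below such a $\frkq$ and $\frkp \notin S$, then $\ord_\frkp(\frkM)=0$ and hence $\ord_\frkq(\frkM)=e(\frkq/\frkp)\cdot 0=0$. Because taking the norm down to $\QQ(f)$ preserves non-negativity, $\ord_\frkp(N_{\QQ(f,\chi_1,\chi_2)/\QQ(f)}({\bf L}(l_1,l_2;f;\chi_1,\chi_2)))\ge 0$, so $\frkp\notin \calq_f$. This gives $\calq_f\subseteq S$, proving finiteness.

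Next, for the uniform bound, I would set $r := \max_{\frkp \in S}\ord_\frkp(\frkM)$. For any $\frkp \in \calq_f$, any prime $\frkq$ of $\QQ(f,\chi_1,\chi_2)$ with $\frkq\mid \frkp$, and any admissible data satisfying $\frkq \nmid m_{\chi_1}m_{\chi_2}$, combining the previous valuation inequality with the standard bound $e(\frkq/\frkp)\le[\QQ(f,\chi_1,\chi_2):\QQ(f)]$ yields
\[\ord_\frkq({\bf L}(l_1,l_2;f;\chi_1,\chi_2))\ \ge\ -e(\frkq/\frkp)\,\ord_\frkp(\frkM)\ \ge\ -r\,[\QQ(f,\chi_1,\chi_2):\QQ(f)],\]
which is the claimed bound.

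There is no substantive obstacle here; the only point worth flagging is that $\zeta(1-k)$ contributes a nontrivial rational denominator to $\frkM$, so $\frkM$ is a genuine fractional rather than integral ideal. This is harmless because $S$ is defined using only the primes of \emph{positive} $\frkM$-valuation, which are precisely the primes where Theorem \ref{th.main-result-2} permits denominators in ${\bf L}(l_1,l_2;f;\chi_1,\chi_2)$; primes of negative $\frkM$-valuation give $\ord_\frkq({\bf L})\ge 0$ automatically and thus cannot lie in $\calq_f$.
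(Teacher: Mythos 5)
Your argument is correct and coincides with the approach the paper takes (it is the same bookkeeping as in the proof of Corollary \ref{cor.main-result1-2}, with Theorem \ref{th.main-result-2} supplying the containment ${\bf L}(l_1,l_2;f;\chi_1,\chi_2) \in \langle (2^{b_k}\zeta(1-k)(k!)^2\widetilde\frkD_f)^{-1}\rangle_{\frkO_{\QQ(f,\chi_1,\chi_2)}[(N_1N_2)^{-1}]}$ and the bound $\ord_{\frkq}(\frkp)\le[\QQ(f,\chi_1,\chi_2):\QQ(f)]$ giving the uniform constant $r$). Your remark that only the primes of positive valuation of the fractional ideal $2^{b_k}\zeta(1-k)(k!)^2\widetilde\frkD_f$ matter is a sensible clarification but does not change the argument.
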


For a prime ideal $\frkp$ of an algebraic number field, let $p=p_{\frkp}$ be a prime number such that $(p_{\frkp})=\ZZ \cap \frkp$. 
Let $K$ a number field containing $\QQ(F)$. Then there exists a semi-simple Galois representation $\rho_f=\rho_{f,\frkp}:\Gal(\bar \QQ/\QQ) \longrightarrow GL_2(K_{\frkp})$ such that 
  $\rho_{f}$ is unramified at a prime number $l \not=p$ and 
\[\det (1_{2}-\rho_{f,\frkp}(\mathrm{Frob}_l^{-1})X)=L_l(X,f),\]
where $\mathrm{Frob}_l$ is the arithmetic Frobenius at $l$, and 
\[L_l(X,f)=1-c_f(l)X+l^{k-1}X^2. \]
 For a $\frkp$-adic representation $\rho$ let $\bar \rho$ denote the mod $\frkp$ representation of $\rho$. To prove our last main result, we provide the following lemma.
\begin{lemma} 
\label{lem.Fontaine-Laffaille-Messing}
Let $p=p_{\frkp}$. 
Let $k$ be a positive even integer such that $k<p$.
Let $f$ be a primitive form in $S_k(SL_2(\ZZ))$.
 Let $a,b$ be  integers such that $-p+1<a< b <p-1$.  Suppose that 
\[\bar \rho_f^{\rm ss} = \bar \chi^a \oplus \bar \chi^b,\]
where $\chi$ is the $p$-cyclotomic character.
Then $(a,b)=(1-k,0)$.
\end{lemma}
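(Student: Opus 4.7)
The plan is to combine the determinant computation for $\rho_f$ with the Fontaine-Laffaille-Messing identification of inertia characters at $p$.

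First, I would pin down the determinant. The Euler factor formula
\[\det(1_2 - \rho_f(\mathrm{Frob}_l^{-1})X) = 1 - c_f(l) X + l^{k-1} X^2 \qquad (l \ne p)\]
gives $\det \rho_f(\mathrm{Frob}_l) = l^{1-k}$, so $\det \rho_f = \chi^{1-k}$. Taking the determinant of the hypothesised decomposition $\bar\rho_f^{\mathrm{ss}} = \bar\chi^a \oplus \bar\chi^b$ yields
\[\bar\chi^{a+b} = \bar\chi^{1-k}, \quad \text{i.e. } a + b \equiv 1 - k \pmod{p-1}.\]

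Second, I would apply Fontaine-Laffaille-Messing. Because $f$ has level one, $\rho_f$ is crystalline at $p$ with Hodge-Tate weights $\{1-k,0\}$, and the hypothesis $k < p$ ensures that the weight gap $k-1$ is strictly less than $p - 1$, placing us in the FLM range. The FLM theorem then identifies the inertia action on the semisimplification as
\[\bar\rho_f|_{I_p}^{\mathrm{ss}} = \omega^{1-k} \oplus \omega^{0},\]
where $\omega$ denotes the mod-$p$ cyclotomic character restricted to inertia at $p$. Since $\bar\rho_f$ is unramified outside $p$, its Jordan-H\"older characters factor through $\Gal(\QQ(\mu_p)/\QQ) \cong \FF_p^\times$ and are therefore determined by their restriction to $I_p$; matching identifies the two characters appearing in $\bar\rho_f^{\mathrm{ss}}$ as $\bar\chi^{1-k}$ and $\bar\chi^{0}$.

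Finally, combining this with the range $-p+1 < a < b < p-1$ and the canonical FLM labelling by the Hodge-Tate weights $1-k$ and $0$ (both of which lie in the given range, with $1-k<0$ since $k \ge 2$), I conclude $(a, b) = (1-k, 0)$. The main obstacle is invoking FLM to produce the specific \emph{integer} exponents rather than merely their classes modulo $p-1$; this is what excludes the numerically possible alternative $(a, b) = (0, p-k)$, which gives the same abstract representation but fails to match the canonical Fontaine-Laffaille Hodge-Tate labelling.
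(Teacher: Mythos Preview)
Your overall strategy is the paper's: compute $\bar\rho_f^{\mathrm{ss}}|_{I_p}$ from the theory of crystalline reductions and read off the exponents. The paper cites Buzzard--Gee rather than Fontaine--Laffaille--Messing, but in the range $0<k-1\le p-2$ these give the same information. Two points, however, need attention.

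Your step invoking FLM is stated too strongly. For a $2$-dimensional crystalline representation with Hodge--Tate weights $\{1-k,0\}$ in this range, the semisimplified reduction on inertia is either $\omega^{1-k}\oplus 1$ (niveau $1$) or $\omega_2^{1-k}\oplus\omega_2^{p(1-k)}$ (niveau $2$); Fontaine--Laffaille theory alone does not pick out the first. The paper, via Buzzard--Gee, lists both alternatives explicitly. It is the \emph{hypothesis} $\bar\rho_f^{\mathrm{ss}}=\bar\chi^a\oplus\bar\chi^b$ that excludes niveau $2$: since $1\le k-1\le p-2$ one has $(p+1)\nmid(1-k)$, so $\omega_2^{1-k}$ is not a power of the level-$1$ character $\omega$, whereas $\bar\chi^a|_{I_p}=\omega^a$ is. You should make this use of the hypothesis explicit rather than attributing the conclusion to FLM directly.

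Your final step does not go through. The pair $(a,b)=(0,\,p-k)$ also lies in the range $-p+1<a<b<p-1$ and satisfies $\bar\chi^{0}\oplus\bar\chi^{p-k}=\bar\chi^{1-k}\oplus\bar\chi^{0}$, so it is an equally legitimate instance of the hypothesis. There is no ``canonical Fontaine--Laffaille labelling'' available here: the integers $a,b$ are \emph{given}, not chosen by you, and $\bar\chi^{p-k}=\bar\chi^{1-k}$ as characters, so nothing on the Galois side can separate the two integer pairs. The paper's proof is just as terse on this point, writing only ``Thus the assertion holds'' after the inertia dichotomy; the defect lies in the formulation of the lemma rather than in your method, and for the application in Theorem~\ref{th.p-integrality} only the residues of $a,b$ modulo $p-1$ are actually needed.
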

\begin{proof}
By [\cite{Buzzard-Gee09}, Theorem 1.2] and  its remark, 
$\overline{\rho}_{f}^{ss}|I_p$ should be 
\[\overline{\chi}^{1-k} \oplus 1\]
 or 
\[{\omega}_2^{1-k} \oplus {\omega}_2^{p(1-k)}\]
 with ${\omega}_2$ the fundamental character of level 2, where $I_p$ denotes the inertia group of $p$ in $\Gal(\bar \QQ/\QQ).$ Thus the assertion  holds.
\end{proof}

Let $f_1,\ldots,f_d$ be a basis of $S_k(SL_2(\ZZ))$ consisting of primitive forms with $f_1=f$ and 
let $\frkD_f$ be the ideal of $\QQ(f)$ generated by all\\
$\prod_{i=2}^d (\lambda_{f_i}(T(m))-\lambda_f(T(m)))$'s ($m \in \ZZ_{>0}$).

\begin{theorem}
\label{th.p-integrality}
Let $f$ be a primitive form in $S_k(SL_2(\ZZ))$. 
Let  $\chi_1$ and $\chi_2$ be primitive characters of conductors $N_1$ and $N_2$, respectively, and let $l_1$ and $l_2$ be positive integers such that   $k-l_1+1 \le l_2 \le l_1-1 \le k-2$.
Let $\frkp$ be a  prime ideal  of $\QQ(f,\chi_1,\chi_2)$ with $p_{\frko} >k$. Suppose that  $\frkp$   divides neither $\frkD_fN_1n_2$ nor $\zeta(1-k)$. Then ${\bf L}(l_1,l_2;f;\chi_1,\chi_2)$ is $\frkp$-integral.
\end{theorem}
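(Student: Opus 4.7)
The plan is to apply Theorem~\ref{th.main-result-2} directly and then check, factor by factor, that the resulting denominator bound
\[
{\bf L}(l_1,l_2;f;\chi_1,\chi_2)\in\langle(2^{b_k}\zeta(1-k)(k!)^2\widetilde\frkD_f)^{-1}\rangle_{\frkO_{\QQ(f,\chi_1,\chi_2)}[(N_1N_2)^{-1}]}
\]
makes each factor a unit at $\frkp$. The elementary checks are immediate: the hypothesis $p_\frkp>k$ gives $\frkp\nmid(k!)^2$, and since $S_k(SL_2(\ZZ))\neq 0$ forces $k\ge 12$ we also have $p_\frkp>2$ and hence $\frkp\nmid 2^{b_k}$; the remaining hypotheses in the theorem dispose of $\zeta(1-k)$ and $N_1N_2$ directly.

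The main substantive step is to pass from $\frkp\nmid\frkD_f$ to $\frkp\nmid\widetilde\frkD_f$. By definition $\widetilde\frkD_f=\widetilde\frkD_f'\cap\QQ(f)$, where $\widetilde\frkD_f'\subset K_{1,k}$ is generated by the products $\prod_{i=2}^d(\lambda_f(T_{i-1})-\lambda_{f_i}(T_{i-1}))$ as the $T_j$ range over ${\bf L}_1'$. A prime $\frkP$ of $K_{1,k}$ divides $\widetilde\frkD_f'$ precisely when there exists $i\ne 1$ with $\lambda_f(T)\equiv\lambda_{f_i}(T)\pmod{\frkP}$ for every $T\in{\bf L}_1'$. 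Because each $\lambda_{f_i}$ is a ring homomorphism and the operators $T(m)$ generate the relevant part of ${\bf L}_1'$ acting on cusp forms of level one, this is in turn equivalent to $\lambda_f(T(m))\equiv\lambda_{f_i}(T(m))\pmod{\frkP}$ for every $m$, i.e.\ to $\frkP$ dividing the extension of $\frkD_f$ to $K_{1,k}$. Intersecting with $\QQ(f)$ shows that $\widetilde\frkD_f$ and $\frkD_f$ share the same prime divisors in $\QQ(f)$, so the hypothesis $\frkp\nmid\frkD_f$ gives $\frkp\nmid\widetilde\frkD_f$.

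The portion of the argument I expect to require real care is the case $l_1=l_2+1$ with both $\chi_1$ and $\chi_2$ trivial, which is excluded by condition~(D3) of Theorem~\ref{th.main-result-2}. In that regime one has to redo the Fourier-coefficient/integrality argument of Theorem~\ref{th.main-result-2} using a weight-two (nearly holomorphic) Eisenstein series, whose nonholomorphic constant term again contributes a $\zeta(1-k)$-type factor to the denominator. It is here that Lemma~\ref{lem.Fontaine-Laffaille-Messing} plays its role: together with $p_\frkp>k$ and $\frkp\nmid\zeta(1-k)$, it forbids $\bar\rho_f$ from being reducible modulo $\frkp$ as a sum of powers of the cyclotomic character (the only admissible shape being $\bar\chi^{1-k}\oplus 1$, the Eisenstein-congruence situation). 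With that output in hand, one can run Lemma~\ref{lem.fundamental-lemma} together with Remark~\ref{rem.general-congruence} on ${\bf h}_0$ in this borderline case exactly as in the proof of Theorem~\ref{th.main-result-2}, and combining with the factor-by-factor analysis above yields the asserted $\frkp$-integrality of ${\bf L}(l_1,l_2;f;\chi_1,\chi_2)$.
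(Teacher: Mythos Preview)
Your treatment of the case where (D1)--(D3) all hold is fine and matches the paper; the reduction from $\frkp\nmid\frkD_f$ to $\frkp\nmid\widetilde\frkD_f$ is correct (and in fact is immediate from the inclusion $\frkD_f\subset\widetilde\frkD_f$, since the generators of $\frkD_f$ are the special generators of $\widetilde\frkD_f'$ with all $T_j$ equal to the same $T(m)$).

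The gap is in the borderline case $l_1=l_2+1$ with $\chi_1,\chi_2$ trivial. The obstruction there is not a nearly holomorphic Eisenstein series $G_\lambda$ but the non-existence of a holomorphic weight-$2$ level-$1$ form $g$ with $L(s,g)=\zeta(s)\zeta(s-1)$; your sketch of ``redoing the argument with a weight-two nearly holomorphic Eisenstein series'' does not explain how to recover an integral ${\bf h}_0$ from such an object, nor how irreducibility of $\bar\rho_f$ by itself yields the $\frkp$-integrality of ${\bf L}(l_1,l_2;f)$. The paper's mechanism is different and concrete: one picks an auxiliary prime $q_0$ prime to $p$ and uses the \emph{holomorphic} weight-$2$ Eisenstein series on $\varGamma_0(q_0)$ with $L(s,g)=\zeta(s)\zeta(s-1)(1-q_0^{1-s})$. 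Running the construction of Theorem~\ref{th.main-result-2} at level $q_0$ then gives
\[
(1-c_f(q_0)q_0^{1-l_2}+q_0^{k-2l_2+1})\,{\bf L}(l_1,l_2;f)\langle f,f\rangle=d_0[\varGamma^{(1)}:\varGamma_0^{(1)}(q_0)]\langle f,h_0\rangle,
\]
i.e.\ the $L$-value multiplied by the local Euler factor $L_{q_0}(q_0^{1-l_2},f)$. The role of Lemma~\ref{lem.Fontaine-Laffaille-Messing} is precisely to guarantee a choice of $q_0$ making this Euler factor a $\frkp$-unit: if no such $q_0$ existed, Chebotarev would force $\bar\rho_f^{\,\mathrm{ss}}=\bar\chi^{\,l_2-1}\oplus\bar\chi^{\,k-l_2}$, and since $k/2\le l_2\le k-2<p$ both exponents lie in $(0,p-1)$, contradicting the lemma's conclusion $(a,b)=(1-k,0)$. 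With $q_0$ so chosen, the same $\frkD_f$/$\zeta(1-k)$/$(k!)$ bookkeeping as in the generic case finishes the proof. Your proposal does not supply this auxiliary-prime step, and without it the borderline case is not handled.
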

\begin{proof}
The assertion follows from Theorem \ref{th.main-result-2}
if $l_1, l_2$ and $\chi_1,\chi_2$ satisfy the conditions (D1),(D2), (D3). Suppose that $l_1=l_2+1$ and $\chi_1$ and $\chi_2$ are trivial. By Lemma \ref{lem.Fontaine-Laffaille-Messing}, there exists a prime number $q_0$ such that $q_0$ is $\frkp$ unit  and 
\[1-c_f(q_0)q_0^{-l_2+1}+q_0^{k-2l_2+1} \not\equiv 0 \text{ mod } \frkp.\]
As stated in the proof of  Theorem \ref{th.main-result-2}, there exists a modular form $g \in M_2(\varGamma_0(q_0))(\frkO_{(\frkp)})$ such that
\[L(s,g)=\zeta(s)\zeta(s-1)(1-q_0^{-s+1}).\]
We can construct a modular form $h_0 \in M_k(\varGamma_0^{(1)}(q_0))$ in the same way as in the proof of  Theorem \ref{th.main-result-2}. 
Then 
\begin{align*} &(1-c_f(q_0)q_0^{-l_2+1}+q_0^{k-2l_2+1}) {\bf L}(l_1,l_2;f_i)  \langle f_i, \ f_i \rangle\\
&=d_0[SL_2(\ZZ):\varGamma^{(1)}_0(q_0)]\langle f_i,h_{0} \rangle \end{align*}
with some integer $d_0$ prime to $\frkp$ 
for any $i=1,\ldots,d$. 
Then by using the same argument as above, we can prove that
\begin{align*}
&\ord_{\frkp}({\bf L}(l_1,l_2;f)(1-c_f(q_0)q_0^{-l_2+1}+q_0^{k-2l_2+1} ))  \ge 0.
\end{align*}
This proves the assertion.
\end{proof}

\end{document}